\newtheorem{theorem}{Theorem}[section]
\newtheorem{lemma}[theorem]{Lemma}
\newtheorem{proposition}[theorem]{Proposition}
\newtheorem{corollary}[theorem]{Corollary}
\theoremstyle{definition}
\newtheorem{examples}[theorem]{Examples}
\begin{document}

\title [Algebraic Properties upon Graph Operations]{Vertex decomposability, shellability and Cohen-Macaulayness of graphs upon graph operations}

\author {Fahimeh Khosh-Ahang Ghasr}

\address{Fahimeh Khosh-Ahang, Department of Mathematics, School of Science, Ilam University, P.O.Box 69315-516, Ilam, Iran}
\email{fahime$_{-}$khosh@yahoo.com and f.khoshahang@ilam.ac.ir}


\begin{abstract}
Throughout this work, the vertex decomposability and shellability of graphs formed from other graphs by various operations are investigated. Also among the other things, by using some graph operations, new classes of Cohen-Macaulay graphs from previous ones are presented.

\end{abstract}

\thanks{}

\subjclass[2010]{05E45, 05C76, 13H10}


\keywords{cartesian product, Cohen-Macaulay graph, corona, join, lexicographic product, rooted product, shellable graph, vertex decomposable graph.}

\maketitle

\setcounter{tocdepth}{1}

\section{Introduction}
Vertex decomposability and shellability are some topological combinatorial notions which are related to the
algebraic properties of the Stanley-Reisner ring of a simplicial complex. These concepts were first introduced in the pure case \cite{Mc, Provan+Billera, Rudin} and then extended to non-pure complexes \cite{BW, BW2}. 

Let $G=(V,E)$ be a simple graph. For each vertex $x$ in $G$, $N_G(x)$ denotes the set of vertices adjacent to $x$ and $N_G[x]=N_G(x)\cup\{x\}$. $N_G(x)$ (resp. $N_G[x]$) is called the open (resp. closed) neighbourhood of $x$ in $G$. For each subset $S$ of $V$, the induced subgraph of $G$ on $V\setminus S$ is denoted by $G\setminus S$. $G\setminus \{x\}$ is briefly denoted by $G\setminus x$. The complete graph, the path graph and the cycle graph with $n$ vertices are denoted by $K_n$, $P_n$ and $C_n$ respectively. A graph with no vertices and edges is called the empty graph.  A graph which has no $C_n$ as a subgraph is called $C_n$-free. A subset $S$ of $V(G)$ is called an independent set of vertices in $G$ if the induced subgraph of $G$ on $S$ is totally disconnected. The maximum size of independent sets of vertices in $G$ is denoted by $\alpha(G)$ and if all maximal independent sets of vertices in $G$ have the same size, $G$ is called unmixed. The independence complex $\Delta_G$ of $G$ is the simplicial complex with vertex set $V$ whose faces are the independent sets of vertices of $G$. A graph is called vertex decomposable (resp. shellable, Cohen-Macaulay, unmixed) if its independence complex is vertex decomposable (resp. shellable, Cohen-Macaulay, pure). These properties of graphs have been studied in many papers (cf. \cite{D+E, F+V, G+W, HH, M+K+Y, Vantuyl, W}).  In view of \cite[Lemma 4]{W}, a graph $G$ is vertex decomposable if either it is totally disconnected or there is a vertex $x$ in $G$ such that 
\begin{itemize}
\item[1.] $G\setminus x$ and $G\setminus N_G[x]$ are both vertex decomposable, and
\item[2.]  no independent set of vertices in $G\setminus N_G[x]$ is a maximal independent set of vertices in $G\setminus x$.
\end{itemize}
The vertex $x$ satisfying the condition 2 above is called a shedding vertex of $G$. Also a simplicial complex $\Delta$ is shellable if the facets of $\Delta$ can be ordered as $F_1, \dots , F_m$ in such a way that for all $1\leq i<j\leq m$, there exist $v\in F_j\setminus F_i$ and $\ell\in \{1, \dots , j-1\}$ with $F_j\setminus F_\ell=\{v\}$. Such an order $F_1, \dots , F_m$ is called a shelling of $\Delta$. Moreover for a fixed shelling and two facets $F$ and $F'$ of $\Delta$ we write $F<F'$ if $F$ is lied before $F'$ in the shelling. For our convenience we use the same name face (facet) of $G$ for a (maximal) independent set of vertices of $G$ which are in fact the faces (facets) of $\Delta_G$. 

Because of the following known implications, finding new vertex decomposable and shellable graphs leads us to find new (sequentially) Cohen-Macaulay graphs:
$$\textrm{Vertex decomposable} \Rightarrow \textrm{Shellable} \Rightarrow \textrm{Sequentially Cohen-Macaulay},$$
$$\textrm{Pure shellable} \Rightarrow \textrm{Cohen-Macaulay}.$$ 
Among a lot of works concerning to these combinatorial and algebraic properties of graphs, some of them are devoted to construct new graphs with these properties from previous ones (cf. \cite{D+E, H+T, M+K+Y, Mousi, Vantuyl}).  In this regard it is natural to study vertex decomposability (shellability, Cohen-Macaulayness) under graph operations. In this paper, we concentrate on some well-known graph operations such as disjoint union, join, rooted product, corona, cartesian product (product) and lexicographic product (composition). We remark that some of them (disjoint union and rooted product) are studied through some other works (\cite{Mousi, VV, W}).  Moreover \cite[Theorem 3.12]{SF} and \cite[Theorem 1.1]{Vantuyl} have been generalized in Theorems \ref{shell}, \ref{shell2} and \ref{2.15}. Also among the other things, by using these graph operations, some new classes of Cohen-Macaulay graphs from previous ones are presented in Proposition \ref{pro2.8} and Corollaries \ref{cor2.7} and \ref{cor2.16}.

\section{Upon some graph operations}
 Throughout this paper, $G$ and $H$ are two graphs with disjoint sets of vertices and $\mathcal{H}=\{H_x \ | \ x\in V(G)\}$ is a family of graphs indexed by vertices of $G$. Furthermore, all graphs are non-empty, finite and simple unless otherwise is stated.  There are well-known results about vertex decomposability and shellability of cycles and chordal graphs which may be used in our examples in the sequel:

\begin{theorem}\label{2.1}
\begin{itemize}
\item[1.] \cite[Proposition 4.1]{F+V} $C_n$ is vertex decomposable (shellable) if and only if $n=3$ or $n=5$;
\item[2.] \cite[Theorem 1.2]{VV} and \cite[Corollary 7]{W} Any chordal graph is vertex decomposable (shellable).
\end{itemize}
\end{theorem}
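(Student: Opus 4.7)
The plan is to prove Part 2 first, then use it freely in Part 1.

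For Part 2 I would induct on $|V(G)|$. The empty and totally disconnected graphs are trivially vertex decomposable, as their independence complex is a full simplex. For the inductive step, assume $G$ has at least one edge. By Dirac's theorem applied to a connected component of $G$ of size at least two, there is a simplicial vertex $v$ with at least one neighbor $u$. Both $G \setminus u$ and $G \setminus N_G[u]$ are induced subgraphs of $G$, hence chordal, and vertex decomposable by induction. To verify the shedding condition at $u$, suppose $F$ is an independent set in $G \setminus N_G[u]$ that is maximal in $G \setminus u$. Since $N_G[v]$ is a clique containing $u$, we have $N_G(v) \subseteq N_G[u]$, so $F$ is disjoint from $N_G(v)$; and since $v \in N_G(u)$, $v \notin F$. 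Hence $F \cup \{v\}$ is independent in $G \setminus u$, contradicting the maximality of $F$.

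For Part 1, the ``if'' direction is immediate. $K_3$ has independence complex equal to three isolated vertices, hence vertex decomposable. For $C_5$ on $v_1,\dots,v_5$, I would take $x = v_1$: then $C_5 \setminus v_1 = P_4$ is a tree, and thus chordal and vertex decomposable by Part 2, while $C_5 \setminus N_{C_5}[v_1]$ is the single edge on $\{v_3, v_4\}$. Every independent set of an edge has size at most one, whereas every maximal independent set of $P_4$ has size two, so the shedding condition is met at $v_1$.

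For the converse, by the vertex-transitivity of $C_n$ it suffices to show that a fixed vertex, say $x = v_n$, fails to be shedding whenever $n = 4$ or $n \geq 6$. Writing $C_n \setminus v_n = P_{n-1}$ on $v_1, \dots, v_{n-1}$ and $C_n \setminus N_{C_n}[v_n] = P_{n-3}$ on $v_2, \dots, v_{n-2}$, the task reduces to exhibiting an independent set $S \subseteq \{v_2, \dots, v_{n-2}\}$ that is also maximal in $P_{n-1}$. A parity-based construction suffices: $S = \{v_2\}$ when $n = 4$, $S = \{v_2, v_4, \dots, v_{n-2}\}$ for even $n \geq 6$, and $S = \{v_2, v_4, \dots, v_{n-5}, v_{n-2}\}$ for odd $n \geq 7$. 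The main obstacle of the whole argument lies precisely in this converse direction, since it demands a uniform construction of a ``bad'' maximal independent set across all admissible parities of $n$; once $S$ is in hand, the routine verification that each such $S$ dominates every vertex of $P_{n-1}$ shows that no vertex of $C_n$ is shedding, and thus $C_n$ is not vertex decomposable.
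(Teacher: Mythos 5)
The paper does not actually prove Theorem \ref{2.1}; it is quoted with citations to Francisco--Van Tuyl, Van Tuyl--Villarreal and Woodroofe, so there is no in-paper argument to compare against. Your Part 2 is the standard (and correct) argument of Woodroofe: take a simplicial vertex $v$ guaranteed by Dirac's theorem, shed a neighbour $u$ of $v$, use that induced subgraphs of chordal graphs are chordal, and verify the shedding condition from $N_G[v]\subseteq N_G[u]$. Your treatment of $C_3$, $C_5$, and the non-existence of shedding vertices in $C_4$ and $C_n$ for $n\geq 6$ is also correct; the dominating independent sets $S$ you exhibit do witness that no vertex of $C_n$ is shedding, which rules out vertex decomposability.

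The genuine gap is the parenthetical ``shellable'' in Part 1. The statement asserts two biconditionals: $C_n$ is vertex decomposable iff $n\in\{3,5\}$, \emph{and} $C_n$ is shellable iff $n\in\{3,5\}$. The implication runs vertex decomposable $\Rightarrow$ shellable, not conversely, so showing that $C_n$ has no shedding vertex for $n=4$ or $n\geq 6$ only rules out vertex decomposability; it says nothing about shellability, and there do exist shellable complexes that are not vertex decomposable. The cited source closes this by proving the stronger statement that $C_n$ is not even sequentially Cohen--Macaulay for $n\notin\{3,5\}$ (via a computation with the Alexander dual / componentwise linearity of the cover ideal, or equivalently the reduced homology of the independence complex of $C_n$), which kills shellability through the chain vertex decomposable $\Rightarrow$ shellable $\Rightarrow$ sequentially Cohen--Macaulay. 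Your proof needs an additional argument of this kind --- or a direct combinatorial proof that $\Delta_{C_n}$ admits no shelling --- before the shellability half of Part 1 is established. (Part 2 is fine as stated, since vertex decomposability of chordal graphs already yields their shellability.)
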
 
The main body of our work is organized into the following subsections:

\subsection{Disjoint union ($G\cup H$).} 
Disjoint union $G\cup H$ of $G$ and $H$ is a graph with $V(G\cup H) =V(G)\cup V(H)$ and $E(G\cup H)=E(G)\cup E(H)$.

The following lemma plays an essential role in our results.
\begin{lemma}\label{lemma2.4}
\cite[Lemma 2.4]{VV} and \cite[Lemma 20]{W}
$G$ and $H$ are vertex decomposable (shellable) if and only if $G\cup H$ is vertex decomposable (shellable).
\end{lemma}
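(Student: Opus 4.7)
The plan is to use the recursive characterization of vertex decomposable graphs recalled in the introduction, together with a direct construction of shellings on the simplicial join, resting on the following observation. Because $V(G)\cap V(H)=\emptyset$ and no edge of $G\cup H$ joins them, the faces of $\Delta_{G\cup H}$ are exactly the sets $I_G\cup I_H$ with $I_G\in\Delta_G$ and $I_H\in\Delta_H$; in particular, the facets of $\Delta_{G\cup H}$ are the unions of a facet of $\Delta_G$ with a facet of $\Delta_H$. Moreover, for any $x\in V(G)$ we have $N_{G\cup H}(x)=N_G(x)$, $(G\cup H)\setminus x=(G\setminus x)\cup H$, and $(G\cup H)\setminus N_{G\cup H}[x]=(G\setminus N_G[x])\cup H$, so the shedding operations commute with disjoint union.

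For vertex decomposability I would induct on $|V(G)|+|V(H)|$. In the forward direction, if $G$ and $H$ are both totally disconnected then so is $G\cup H$; otherwise, without loss of generality let $x$ be a shedding vertex of $G$. The same $x$ serves as a shedding vertex of $G\cup H$: the two derived graphs are vertex decomposable by induction, and any face $I$ of $(G\cup H)\setminus N_{G\cup H}[x]$ that were maximal in $(G\cup H)\setminus x$ would decompose as $I_G\cup I_H$ with $I_G$ a face of $G\setminus N_G[x]$ maximal in $G\setminus x$ (since enlarging $I_G$ inside $V(G)\setminus\{x\}$ would enlarge $I$), contradicting the shedding property of $x$ in $G$. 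Conversely, if $x$ is a shedding vertex of $G\cup H$ lying, say, in $V(G)$, induction applied to the two derived graphs yields vertex decomposability of $G\setminus x$, $G\setminus N_G[x]$ and $H$; the shedding property of $x$ in $G$ then follows by combining any facet of $\Delta_H$ with a hypothetical bad face of $G\setminus N_G[x]$ to produce a bad face of $G\cup H$.

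For shellability, given shellings $F_1,\dots,F_m$ of $\Delta_G$ and $F'_1,\dots,F'_n$ of $\Delta_H$, I would list the facets $F_i\cup F'_j$ of $\Delta_{G\cup H}$ in lexicographic order on $(i,j)$; the shelling witness for any pair of such facets is obtained by applying the given shelling of $\Delta_G$ or $\Delta_H$ to the varying coordinate and keeping the other factor fixed. For the converse, fix a facet $F'_0$ of $\Delta_H$ and extract the subsequence of the given shelling of $\Delta_{G\cup H}$ consisting of facets of the form $A\cup F'_0$, reading off the $G$-parts in order; this subsequence is exhaustive, since $A\cup F'_0$ is a facet of $\Delta_{G\cup H}$ for every facet $A$ of $\Delta_G$. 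The crucial step is verifying that if $A\cup F'_0$ precedes $A'\cup F'_0$ in the full shelling, the shelling witness $\Phi_\ell=A_\ell\cup B_\ell$ automatically has $B_\ell=F'_0$: the difference vertex $v\in(A'\cup F'_0)\setminus\Phi_\ell$ lies in $A'\setminus A\subseteq V(G)$, forcing $F'_0\subseteq B_\ell$, which together with maximality of $F'_0$ yields $B_\ell=F'_0$; hence $A'\setminus A_\ell=\{v\}$, giving the shelling condition inside the subsequence.

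I expect the converse direction of shellability to be the main obstacle, precisely in showing the restricted subsequence satisfies the shelling condition; the disjointness of $V(G)$ and $V(H)$ is what makes the $H$-factor of the witness automatically equal to $F'_0$, which is exactly the ingredient that ensures the witness lies in the restricted subsequence.
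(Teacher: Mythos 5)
Your proposal is correct. The paper itself offers no proof of this lemma --- it is imported verbatim from \cite{VV} and \cite{W} --- and your argument is essentially the standard one found there: the identification $\Delta_{G\cup H}=\Delta_G*\Delta_H$, the lexicographic concatenation of shellings for the forward direction, and, for the converse, restriction of a given shelling to the facets containing a fixed facet $F_0'$ of $\Delta_H$. You correctly isolate and resolve the only delicate point, namely that the shelling witness for two facets $A\cup F_0'<A'\cup F_0'$ must itself have $H$-part equal to $F_0'$ (since the difference vertex lies in $V(G)$, forcing $F_0'\subseteq B_\ell$ and hence equality by maximality), so the witness survives the restriction; the vertex-decomposability induction, using that deletion and neighbourhood-deletion commute with disjoint union and that a bad face of $G\setminus N_G[x]$ lifts to a bad face of $(G\cup H)\setminus N_{G\cup H}[x]$ by adjoining a facet of $\Delta_H$, is likewise sound.
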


\subsection{Join ($G+H$).} 

The join $G+H$ of $G$ and $H$ defined by Zykov in \cite{Z} is disjoint union of $G$ and $H$ with additional edges joining each vertex of $G$ to each vertex of $H$.

 Since  $\Delta_{G+H}=\Delta_G\cup \Delta_H$, one can easily gain the following proposition in which a necessary and sufficient condition for vertex decomposability (shellability) of $G+H$ is given.

\begin{proposition}\label{2.2}
 $G+H$ is vertex decomposable (shellable) if and only if $G$ and $H$ are vertex decomposable (shellable) and at least one of them is complete.
\end{proposition}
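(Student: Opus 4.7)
The proof rests on the fact, noted in the excerpt, that $\Delta_{G+H}=\Delta_G\cup\Delta_H$: since every vertex of $G$ is adjacent to every vertex of $H$ in $G+H$, no independent set of $G+H$ can meet both sides, and the two subcomplexes share only the empty face. In particular, the facets of $\Delta_{G+H}$ are precisely the facets of $\Delta_G$ together with those of $\Delta_H$.

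For the $(\Leftarrow)$ direction, assume without loss of generality that $G$ is complete, so every facet of $\Delta_G$ is a singleton. For shellability, concatenate a shelling of $\Delta_H$ with the singletons $\{x\}$, $x\in V(G)$, in any order: the shelling condition inside the $\Delta_H$-block is inherited, and for any singleton $\{x_k\}$ appended after any earlier facet $F_\ell$ one has $\{x_k\}\setminus F_\ell=\{x_k\}$. For vertex decomposability, induct on $|V(G)|$, taking any $x\in V(G)$ as shedding vertex: since $G$ is complete, $N_{G+H}[x]=V(G)\cup V(H)$, so $(G+H)\setminus N_{G+H}[x]=\emptyset$ is trivially vertex decomposable, and $(G+H)\setminus x=(G\setminus x)+H$ is vertex decomposable by induction (base case $|V(G)|=1$: $(G+H)\setminus x=H$). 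The shedding condition is trivial because $\emptyset$ is not maximal in the non-empty complex $(G+H)\setminus x$.

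The key step of the $(\Rightarrow)$ direction is to show that at least one of $G,H$ is complete. Suppose neither is; then each of $\Delta_G,\Delta_H$ contains a facet of size $\geq 2$. Given a shelling of $\Delta_{G+H}$, let $F_j$ be the earliest facet of size $\geq 2$ (without loss of generality $F_j\in\Delta_H$), and let $F_k$ be the earliest $\Delta_G$-facet of size $\geq 2$, so $k>j$. The shelling condition applied to $(F_j,F_k)$ yields $\ell<k$ and $v\in F_k$ with $F_k\setminus F_\ell=\{v\}$. Since $F_k\setminus\{v\}$ is a nonempty subset of $V(G)$, $F_\ell$ must meet $V(G)$ and hence lie in $\Delta_G$; by the minimality of $F_k$, $F_\ell$ is a singleton $\{u\}$, forcing $F_k=\{u,v\}$. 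But $\{u\}$ being a facet of $\Delta_G$ means $u$ is adjacent in $G$ to every other vertex, in particular to $v$, contradicting the independence of $F_k$. For the vertex decomposable case, this step is applied after invoking the implication vertex decomposable $\Rightarrow$ shellable.

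With $G$ complete, $G$ is trivially shellable and vertex decomposable. For $H$ shellable, restrict the shelling of $\Delta_{G+H}$ to its $\Delta_H$-facets and verify the condition: for a pair $F'<F''$ of $\Delta_H$-facets in the restricted order, if $|F''|\geq 2$ then the witness $F_\ell$ from the original shelling must itself lie in $\Delta_H$ (by the same argument as above) and hence appears in the restricted shelling, and if $|F''|=1$ then $F'$ itself serves. For $H$ vertex decomposable, induct on $|V(G+H)|$ using a shedding vertex $x$ of $G+H$: if $x\in V(G)=V(K_n)$, the inductive hypothesis applied to the smaller join $(G+H)\setminus x=K_{n-1}+H$ yields $H$ vertex decomposable directly (or, if $n=1$, already $(G+H)\setminus x=H$); if $x\in V(H)$, verify that $x$ is also a shedding vertex of $H$ itself, then combine the inductive vertex decomposability of $H\setminus x$ obtained from $(G+H)\setminus x=K_n+(H\setminus x)$ with the vertex decomposability of $H\setminus N_H[x]=(G+H)\setminus N_{G+H}[x]$. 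The main obstacle I foresee is precisely the bookkeeping for this last step---checking that the shedding condition in $G+H$ translates to one in $H$ and that the inductive hypothesis remains applicable at each stage.
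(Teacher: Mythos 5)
Your proof is correct and proceeds from exactly the observation the paper itself relies on, namely that $\Delta_{G+H}=\Delta_G\cup\Delta_H$ with the two subcomplexes meeting only in the empty face; the paper states the proposition as an easy consequence of this and omits all details, while you supply them (the concatenated shelling, the shedding-vertex induction for the complete factor, the minimal-large-facet argument forcing one factor to be complete, and the restriction/induction arguments recovering shellability and vertex decomposability of $H$), and all of these check out. No gaps.
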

\begin{corollary}
Suppose that $G+H$ is Cohen-Macaulay. Then 
\begin{itemize}
\item[1.] $\alpha(G)=\alpha(H)$;
\item[2.] $G+H$ is vertex decomposable if and only if $G+H$ is complete.
\end{itemize}
\end{corollary}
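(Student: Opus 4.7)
The plan is to leverage the decomposition $\Delta_{G+H}=\Delta_{G}\cup\Delta_{H}$ already noted in the paragraph preceding Proposition \ref{2.2}. Because every edge between $V(G)$ and $V(H)$ is present in $G+H$, any independent set of $G+H$ is contained entirely in $V(G)$ or entirely in $V(H)$. Hence the facets of $\Delta_{G+H}$ split into two groups: the maximal independent sets of $G$, each of size $\alpha(G)$, and the maximal independent sets of $H$, each of size $\alpha(H)$.

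For part (1), I would simply invoke purity: a Cohen-Macaulay complex is pure, so all its facets have the same cardinality. Applying this to the splitting above immediately yields $\alpha(G)=\alpha(H)$.

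For part (2), the ``if'' direction is handed to us by Theorem \ref{2.1}(2), since a complete graph is chordal and therefore vertex decomposable. For ``only if'', I would argue as follows. Assume $G+H$ is vertex decomposable. Proposition \ref{2.2} then forces one of $G$, $H$ to be complete; without loss of generality, say $G$ is complete, so $\alpha(G)=1$. By part (1), $\alpha(H)=1$ as well, meaning $H$ admits no independent set of size $2$, i.e., $H$ itself is complete. Since every vertex of $G$ is adjacent to every vertex of $H$ in $G+H$, and both $G$ and $H$ are complete, every pair of distinct vertices of $G+H$ is adjacent; hence $G+H$ is complete.

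There is essentially no obstacle — the whole argument is a clean two-line consequence of Proposition \ref{2.2} together with the observation that purity of $\Delta_{G+H}$ equates $\alpha(G)$ and $\alpha(H)$. The only point worth highlighting is the small ``bootstrap'' in (2): one needs both (i) the complete-factor conclusion of Proposition \ref{2.2} and (ii) the equality of independence numbers from part (1) to upgrade ``one factor is complete'' to ``both factors are complete''.
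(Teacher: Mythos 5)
Your argument is correct and takes essentially the same route as the paper: part (1) is purity of a Cohen--Macaulay complex applied to the splitting of the facets of $\Delta_{G+H}$ into those of $\Delta_G$ and those of $\Delta_H$ (which the paper outsources to \cite[Corollary 5.3.12]{V2} and \cite[Corollary 6]{T+V}), and part (2) is exactly the paper's intended combination of Proposition \ref{2.2}, part (1), and Theorem \ref{2.1}(2). One small wording caveat: the facets coming from $G$ all have size $\alpha(G)$ only \emph{after} purity is invoked (this is unmixedness of $G$, not automatic), but since $\alpha(G)$ and $\alpha(H)$ are each realized by some facet of the pure complex $\Delta_{G+H}$, your conclusion stands.
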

\begin{proof}
1. follows from  \cite[Corollary 5.3.12]{V2} and \cite[Corollary 6]{T+V}.

2. follows from Theorem \ref{2.1}(2), Proposition \ref{2.2} and Part 1.
\end{proof}
\subsection{Rooted product ($G(\mathcal{H})$).}

The rooted product $G(\mathcal{H})$ of a graph $G$ by a family of rooted graphs $\mathcal{H}=\{H_x \ | \ x\in V(G)\}$ is defined in 1978, \cite{G+M}, as the union of $G$ and $H_x$s such that for each vertex $x$ of $G$ one should identify $x$ with the root of $H_x$. If all graphs of the family $\mathcal{H}$ are isomorphic to one graph, say $H$, then it is called the rooted graph of $G$ by $H$ and is denoted by $G(H)$.  

The authors in \cite{Mousi} have studied the rooted product $G(\mathcal{H})$ as a new construction of graphs and have gained the following result:

\begin{theorem}\cite[Proposition 2.3 and Remark 2.4]{Mousi}\label{rooted}
Suppose that $G$ is a simple graph and $\mathcal{H}=\{H_x \ | \ x\in V(G)\}$ is a family of rooted graphs such that $x$ is the root vertex of $H_x$. Then
\begin{itemize}
\item[1.] If $H_x$s are vertex decomposable and $x$ is a shedding vertex of $H_x$ for all $x\in V(G)$, then $G(\mathcal{H})$ is also vertex decomposable.
\item[2.]  If $G(\mathcal{H})$ is vertex decomposable (shellable), then $H_x$s are vertex decomposable (shellable).
\end{itemize}
\end{theorem}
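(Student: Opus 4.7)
Both parts admit inductive proofs on $|V(G)|$, with the base case $|V(G)|=1$ giving $G(\mathcal{H})=H_x$ directly. The common structural fact is that the induced subgraph of $G(\mathcal{H})$ on $V(H_x)$ is precisely $H_x$, and it is joined to the rest of $G(\mathcal{H})$ only through the single vertex $x$.

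For Part 1, I would fix any $x\in V(G)$ and show that $x$ is a shedding vertex of $G(\mathcal{H})$. Since removing $x$ severs $V(H_x)\setminus\{x\}$ from the rest,
\[
G(\mathcal{H})\setminus x = (H_x\setminus x)\sqcup(G\setminus x)(\mathcal{H}|_{V(G)\setminus\{x\}}),
\]
\[
G(\mathcal{H})\setminus N_{G(\mathcal{H})}[x] = (H_x\setminus N_{H_x}[x])\sqcup\bigsqcup_{y\in N_G(x)}(H_y\setminus y)\sqcup(G\setminus N_G[x])(\mathcal{H}|_{V(G)\setminus N_G[x]}).
\]
Here $H_x\setminus x$, $H_x\setminus N_{H_x}[x]$ and each $H_y\setminus y$ are vertex decomposable by the shedding hypothesis, while the smaller rooted products are handled by induction; Lemma \ref{lemma2.4} then delivers vertex decomposability of both graphs above. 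For the shedding condition: given an independent set $F$ of $G(\mathcal{H})\setminus N_{G(\mathcal{H})}[x]$, its trace $F\cap V(H_x)$ is independent in $H_x\setminus N_{H_x}[x]$, so the shedding of $x$ in $H_x$ supplies some $v\in V(H_x)\setminus\{x\}$ extending $F\cap V(H_x)$ inside $H_x\setminus x$; since the $G(\mathcal{H})$-neighbours of $v$ all lie in $V(H_x)$, the same $v$ extends $F$ inside $G(\mathcal{H})\setminus x$.

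For Part 2, the subtlety is that $H_x$ sits in $G(\mathcal{H})$ only as an induced subgraph, whereas vertex decomposability (shellability) is not hereditary under induced subgraphs in general. My plan is to combine the disjoint-union analysis above with the standard fact that links at a vertex preserve both properties of independence complexes. Assuming $|V(G)|\geq 2$, first handle the case $V(G)\setminus N_G[x]\neq\emptyset$: choose such a $y$ and note that $\mathrm{link}_{\Delta_{G(\mathcal{H})}}(y)=\Delta_{G(\mathcal{H})\setminus N_{G(\mathcal{H})}[y]}$ remains vertex decomposable (resp. shellable). The decomposition analogous to Part 1 exhibits $(G\setminus N_G[y])(\mathcal{H}|_{V(G)\setminus N_G[y]})$ as a component still containing $H_x$ rooted identically; Lemma \ref{lemma2.4} and the inductive hypothesis on this smaller rooted product then finish.

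The main obstacle is the dominating case, when $x$ is universal in $G$, so no suitable $y\in V(G)\setminus N_G[x]$ exists. There one must pick $y\in N_G(x)$ and shrink $H_y$ via links at interior (non-root) vertices of $H_y\setminus y$, preserving $y$ and hence $x$; a secondary induction on $\sum_{y\neq x}|V(H_y)|$ reduces this to the non-dominating case. Making the bookkeeping work uniformly---in particular ensuring that the needed interior vertices exist, or that a direct disconnection argument succeeds when they do not---is the most delicate aspect of the proof.
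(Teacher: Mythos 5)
First, a point of comparison: the paper offers no proof of this statement at all --- it is quoted from \cite[Proposition 2.3 and Remark 2.4]{Mousi} --- so there is no in-paper argument to measure your attempt against. Judged on its own terms, your proposal has a genuine gap in each part.

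In Part 1, your decompositions of $G(\mathcal{H})\setminus x$ and $G(\mathcal{H})\setminus N_{G(\mathcal{H})}[x]$ and your verification that $x$ is a shedding vertex of $G(\mathcal{H})$ are correct, but the assertion that ``$H_x\setminus x$, $H_x\setminus N_{H_x}[x]$ and each $H_y\setminus y$ are vertex decomposable by the shedding hypothesis'' is a non sequitur, and it is exactly the crux of the theorem. Being a shedding vertex is purely a condition on independent sets; vertex decomposability of $H_x$ guarantees that \emph{some} vertex of $H_x$ has vertex decomposable deletion and link, not that the designated root $x$ does. That every shedding vertex of a vertex decomposable graph can start a vertex decomposition is not the definition and cannot be invoked without proof; worse, for the neighbours $y\in N_G(x)$ you need vertex decomposability of $H_y\setminus y$ with no compensating information about $H_y\setminus N_{H_y}[y]$ at all. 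A correct argument has to route around this point, e.g.\ by inducting on $\sum_{x}|V(H_x)|$ and shedding a vertex supplied by the vertex decomposition of some $H_x$ itself rather than always shedding the root.

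For Part 2 your sketch stalls exactly where you admit it does, and no ``direct disconnection argument'' can rescue the degenerate case, because as literally stated (with roots allowed to be isolated in their graphs) Part 2 is false: take $G=K_2$ on $\{x,y\}$, $H_x=C_4$ rooted at $x$, and $H_y=K_1$; then $G(\mathcal{H})$ is $C_4$ with one whisker, which is vertex decomposable, while $H_x=C_4$ is not. The statement needs the standing hypothesis of \cite{Mousi} that each root has a neighbour inside its own graph (automatic when the roots are shedding, as in Part 1). Under that hypothesis the clean proof is not an induction at all: choose $z_y\in N_{H_y}(y)$ for each $y\in N_G(x)$ and set $\sigma=\{z_y \ | \ y\in N_G(x)\}$. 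This is an independent set of $G(\mathcal{H})$ with $N[\sigma]\supseteq N_G(x)$ and $N[\sigma]\cap V(H_x)=\emptyset$, so $H_x$ is a union of connected components of $G(\mathcal{H})\setminus N[\sigma]$, whose independence complex is the link of the face $\sigma$ in $\Delta_{G(\mathcal{H})}$. Since links of faces of vertex decomposable (shellable) complexes are vertex decomposable (shellable), Lemma \ref{lemma2.4} finishes the argument in one step.
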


Also, in \cite[Theorem 2]{Kengo}, shellability of $G(H)$ is investigated.

Note that $C_4(K_2)$ is a vertex decomposable and so shellable graph, while $C_4$ is neither vertex decomposable nor shellable. So vertex decomposability (shellability) of $G(H)$ doesn't necessarily imply the vertex decomposability (shellability) of $G$. Also \cite[Example 2.5]{Mousi} shows that the converse of Theorem \ref{rooted}(2) is not generally true for shellability or vertex decomposability when $x$ is not shedding for some $x\in V(G)$.

\subsection{Corona ($Go\mathcal{H}$).}
The corona $Go\mathcal{H}$ of $G$ and $\mathcal{H}$ is the disjoint union of $G$ and $H_x$s with additional edges joining each vertex $x$ of $G$ to all
vertices of $H_x$. If all graphs of the family $\mathcal{H}$ are isomorphic to one graph, say $H$, then we shall write $G o H$ instead of $G o \mathcal{H}$. 

One may find out the corona of $G$ and $\mathcal{H}$ is in fact the rooted product $G(\mathcal{H}')$ where $\mathcal{H}'=\{K_1+H_x \ | \ x\in V(G)\}$ and the only vertex of $K_1$ is the root of $K_1+H_x$ for each $x\in V(G)$. Also both of them are generalizations of adding whiskers or complete graphs to a graph which is studied in \cite{F+H, HH, V}. 

In the following result, by means of some shellings of $H_x$s we shall present a shelling for $Go\mathcal{H}$. To this aim we need to remark that each facet of $Go\mathcal{H}$ is in form of $F\cup (\bigcup_{\ell =1}^kF_{i_\ell})$, such that $F$ is a face of $G$ and $F_{i_\ell}$ is a facet of $H_{x_{i_\ell}}$ when $V(G)\setminus F=\{x_{i_1}, \dots , x_{i_k}\}$. We call this facet as a facet associated to the face $F$ of $G$.

\begin{theorem}\label{theorem2.4}
  $Go\mathcal{H}$ is vertex decomposable (shellable) if and only if $H_x$ is vertex decomposable (shellable) for all $x\in V(G)$. 
\end{theorem}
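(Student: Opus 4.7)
My plan rests on the observation, already noted in the excerpt, that $Go\mathcal{H}$ coincides with the rooted product $G(\mathcal{H}')$ for $\mathcal{H}' = \{K_1 + H_x : x \in V(G)\}$, with the $K_1$-vertex serving as the root of $K_1 + H_x$. For necessity, if $Go\mathcal{H}$ is vertex decomposable (shellable), Theorem~\ref{rooted}(2) yields the same for each $K_1 + H_x$, and Proposition~\ref{2.2}, with $K_1$ supplying the required complete summand, passes this to each $H_x$.

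For sufficiency in the vertex decomposable case, I would verify that the $K_1$-root $x$ is a shedding vertex of $K_1 + H_x$: since $x$ is adjacent to every vertex of $H_x$, $(K_1+H_x)\setminus N[x]$ is empty, so $\emptyset$ is its only face, and $\emptyset$ is not a facet of $(K_1+H_x)\setminus x = H_x$ because $H_x$ is nonempty. Combined with the vertex decomposability of $K_1 + H_x$ provided by Proposition~\ref{2.2}, Theorem~\ref{rooted}(1) then delivers vertex decomposability of $Go\mathcal{H}$.

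The main obstacle is sufficiency for shellability, since Theorem~\ref{rooted}(1) has no shellability analogue. Here I would induct on $|V(G)|$. The base case $|V(G)|=1$ gives $Go\mathcal{H} = K_1 + H_x$ and is handled by Proposition~\ref{2.2}. For the inductive step, fix $x \in V(G)$ and partition the facets of $Go\mathcal{H}$ into class $A$, those avoiding $x$, and class $B$, those containing $x$. The identifications
\[
(Go\mathcal{H}) \setminus x = \bigl((G\setminus x)o\mathcal{H}|_{V(G)\setminus\{x\}}\bigr) \cup H_x,
\]
\[
(Go\mathcal{H}) \setminus N[x] = \bigl((G\setminus N_G[x])o\mathcal{H}|_{V(G)\setminus N_G[x]}\bigr) \cup \bigcup_{y\in N_G(x)} H_y,
\]
combined with the induction hypothesis and Lemma~\ref{lemma2.4}, furnish shellings of both complexes; the first gives a shelling of class $A$ directly, and the second, after prefixing $\{x\}$ to each facet, gives a shelling of class $B$. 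I would list class $A$ before class $B$. Intra-class conditions are inherited from the component shellings, so the only remaining check is the cross-class case: for $F \in A$ and $F' \in B$, take $v = x \in F' \setminus F$; one verifies that $F' \setminus \{x\}$ is a facet of $(G\setminus x)o\mathcal{H}|_{V(G)\setminus\{x\}}$, so $F'' = (F' \setminus \{x\}) \cup F^x$ is a class $A$ facet for any facet $F^x$ of $H_x$, and it satisfies $F' \setminus F'' = \{x\}$, completing the shelling.
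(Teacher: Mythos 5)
Your proposal is correct, and while the necessity direction and the vertex-decomposability direction follow the paper exactly (rooted product plus Proposition~\ref{2.2} and Theorem~\ref{rooted}), your treatment of the \emph{if} part of shellability takes a genuinely different route. The paper constructs one global shelling of $Go\mathcal{H}$ outright: it orders the facets $F\cup(\bigcup_\ell F_{i_\ell})$ lexicographically by the indices of the $G$-vertices missing from $F$ and by the fixed shellings of the $H_{x_i}$, and then verifies the shelling condition by a direct two-case analysis. You instead induct on $|V(G)|$ and observe that any $x\in V(G)$ is a shedding vertex of $Go\mathcal{H}$ whose deletion and link are disjoint unions of smaller coronas and $H_y$'s, so that the standard ``shellable deletion plus shellable link plus shedding vertex'' concatenation applies; Lemma~\ref{lemma2.4} then does the bookkeeping. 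Your argument is arguably cleaner and more modular, at the cost of being recursive rather than exhibiting an explicit order; the paper's explicit shelling is more self-contained and could be reused directly (e.g., to read off restriction sets). The one point you should make explicit is the shedding property itself: every facet of $(Go\mathcal{H})\setminus x$ contains a (nonempty) facet of $H_x$, whose vertices are all adjacent to $x$, so it is already maximal in $Go\mathcal{H}$; this is exactly what justifies both your claim that class $A$ coincides with the facet set of the deletion and your claim that $F''=(F'\setminus\{x\})\cup F^x$ is a class-$A$ facet. With that sentence added, the proof is complete.
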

\begin{proof}


Vertex decomposability and the \textit{only if} part of shellability can be immediately gained from Proposition \ref{2.2} and Theorem \ref{rooted}. So it remains to prove the \textit{if part} of shellability:

 If $G$ is totally disconnected, then the result holds by Lemma \ref{lemma2.4} and Proposition \ref{2.2}. Else each facet of $Go\mathcal{H}$  contains at least one facet of $H_x$ for some vertex $x$ of $G$.
Suppose that $V(G)=\{x_1, \dots , x_n\}$ and for each $1\leq i \leq n$,  $F_{i,1}, \dots , F_{i,m_i}$ is a shelling for $H_{x_i}$. Suppose that $K=F\cup (\bigcup_{\ell =1}^kF_{i_\ell})$ and $K'=F'\cup (\bigcup_{\ell =1}^sF_{j_\ell})$ are two facets of $Go\mathcal{H}$ such that $V(G)\setminus F=\{x_{i_1}, \dots , x_{i_k}\}$ and $V(G)\setminus F'=\{x_{j_1}, \dots , x_{j_s}\}$ where $1\leq i_1<\dots <i_k\leq n$ and $1\leq j_1< \dots < j_s \leq n$. Now if $i_1<j_1$ set $K<K'$ and if $i_1=j_1$ and $F_{i_1}<F_{j_1}$ in $H_{x_{i_1}}$, then set $K<K'$. Else we have $i_1=j_1$ and $F_{i_1}=F_{j_1}$. Then if $i_2< j_2$ we set $K<K'$ and if $i_2=j_2$ and $F_{i_2}<F_{j_2}$ in $H_{x_{i_2}}$, then set $K<K'$. Else we have $i_1=j_1, i_2=j_2, F_{i_1}=F_{j_1}$ and $F_{i_2}=F_{j_2}$. So by going on in this way, we can order $K$ and $K'$. Note that if $F$ is properly contained in $F'$  and $F_{i_\ell}=F_{j_\ell}$ for all $1\leq \ell \leq s$, then set $K<K'$.

Now we claim that this order forms a shelling of $Go\mathcal{H}$. Assume that $K$ and $K'$ are two facets of $Go\mathcal{H}$ with $K<K'$ which are respectively associated to the faces $F$ and $F'$ of $G$. In each of the following cases we shall obtain the desired things:

Case I. $F=F'$. Then $K=F\cup (\bigcup_{\ell =1}^kF_{i_\ell, r_\ell})$ and $K'=F\cup (\bigcup_{\ell =1}^kF_{i_\ell, r'_\ell})$, where $V(G)\setminus F=\{x_{i_1}, \dots , x_{i_k}\}$, $1\leq r_\ell, r'_\ell \leq m_{i_\ell}$. By our order there exists $1\leq j \leq k$ such that $r_1=r'_1, \dots , r_{j-1}=r'_{j-1}$ and $r_j<r'_j$. Therefore $F_{i_j, r_j}<F_{i_j, r'_j}$ in the shelling of $H_{x_{i_j}}$. Hence there exist a vertex $v\in F_{i_j, r'_j}\setminus F_{i_j, r_j}$ and a facet $F_{i_j, r''_j}$ such that $r''_j<r'_j$,  $F_{i_j, r'_j}\setminus F_{i_j, r''_j}=\{v\}$. Now, set $K''=F\cup F_{i_j,r''_j} \cup (\bigcup_{\ell =1, \ell\neq j}^kF_{i_\ell, r'_\ell})$. Then it is clear that $K''<K', v\in K'\setminus K$ and $K'\setminus K''=\{v\}$.

Case II. $F\neq F'$. Then $K=F\cup (\bigcup_{\ell =1}^kF_{i_\ell, r_\ell})$ and $K'=F'\cup (\bigcup_{\ell =1}^{k'}F_{i'_\ell, r'_\ell})$, where $V(G)\setminus F=\{x_{i_1}, \dots , x_{i_k}\}$, $V(G)\setminus F'=\{x_{i'_1}, \dots , x_{i'_{k'}}\}$, $1\leq r_\ell \leq m_{i_\ell}$ and $1\leq r'_\ell \leq m_{i'_\ell}$. Since $F\neq F'$ and $K<K'$, there exists a vertex $x_{i_{\ell_0}}\in F'\setminus F$ and for each $1\leq \ell <i_{\ell_0}$, $x_\ell\in F'$ implies that $x_\ell\in F$. Set $F''=F'\setminus \{x_{i_{\ell_0}}\}$ and $K''=F''\cup F_{i_{\ell_0}, r_{\ell_0}}\cup (\bigcup_{\ell =1}^{k'}F_{i'_\ell, r'_\ell})$. Then it is clear that $K''<K', x_{i_{\ell_0}}\in K'\setminus K$ and $K'\setminus K''=\{x_{i_{\ell_0}}\}$.
\end{proof}
\begin{corollary}\label{cor2.7}
$Go\mathcal{H}$ is a Cohen-Macaulay graph if and only if $H_x$ is complete for all $x\in V(G)$.
\end{corollary}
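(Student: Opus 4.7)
My plan is to reduce the statement to Theorem \ref{theorem2.4} together with the standard implication that a pure shellable complex is Cohen-Macaulay. The workhorse is the facet description recorded in the paragraph just before Theorem \ref{theorem2.4}: every facet of $Go\mathcal{H}$ has the form $F\cup\bigcup_{y\in V(G)\setminus F}F_y$, where $F$ is an independent set (face) of $G$ and each $F_y$ is a facet of $H_y$; in particular, the cardinality of such a facet is $|F|+\sum_{y\notin F}|F_y|$.

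For the \emph{if} direction, assume every $H_x$ is complete, say $H_x=K_{n_x}$. Since complete graphs are chordal, Theorem \ref{2.1}(2) gives that each $H_x$ is (vertex decomposable, hence) shellable, and Theorem \ref{theorem2.4} produces a shelling of $Go\mathcal{H}$. Because $\alpha(H_y)=1$ for every $y$, each $F_y$ appearing in the facet description is a singleton, so every facet has cardinality $|F|+(|V(G)|-|F|)=|V(G)|$. The shelling is therefore pure, and $Go\mathcal{H}$ is Cohen-Macaulay.

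For the \emph{only if} direction, assume $Go\mathcal{H}$ is Cohen-Macaulay, so $\Delta_{Go\mathcal{H}}$ is pure. Fix $x\in V(G)$. Varying only the choice of $F_x$ while keeping $F$ and all other $F_y$ fixed shows that every $H_x$ must be unmixed, so each $|F_y|$ equals $\alpha(H_y)$. Now compare two facets coming from $F=\emptyset$ and $F'=\{x\}$: their sizes are $\sum_{y\in V(G)}\alpha(H_y)$ and $1+\sum_{y\neq x}\alpha(H_y)$, and purity forces $\alpha(H_x)=1$. This means any two vertices of $H_x$ are adjacent, i.e.\ $H_x$ is complete; since $x$ was arbitrary, every $H_x$ is complete.

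I do not expect a genuine obstacle here: the argument is essentially bookkeeping on facet cardinalities once the facet description of the corona is in hand. The only moderately delicate point is isolating one $H_x$ at a time in the unmixedness step of the only-if direction, but a one-coordinate variation of the facet handles it cleanly.
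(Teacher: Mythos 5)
Your proof is correct, and its overall strategy coincides with the paper's: for the \emph{if} direction, shellability of $Go\mathcal{H}$ from Theorem \ref{theorem2.4} plus purity yields Cohen--Macaulayness via ``pure shellable $\Rightarrow$ Cohen--Macaulay''; for the \emph{only if} direction, Cohen--Macaulayness forces purity, which forces each $H_x$ to be complete. The one genuine difference is that you handle both unmixedness statements by direct bookkeeping on facet cardinalities (using that every facet of $Go\mathcal{H}$ is $F\cup\bigcup_{y\notin F}F_y$, so the purity comparison between $F=\emptyset$ and $F=\{x\}$ immediately gives $\alpha(H_x)=1$), whereas the paper outsources exactly these steps to Topp--Volkmann's characterization of well-covered coronas (\cite[Theorem 1]{T+V}) together with \cite[Corollary 5.3.12]{V2}. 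Your route is more self-contained and arguably more transparent, at the cost of having to verify (as you implicitly do) that every set of the form $F\cup\bigcup_{y\notin F}F_y$ with $F$ a face of $G$ and each $F_y$ a facet of $H_y$ really is a facet of the corona; the paper's route is shorter on the page but leans on an external well-coveredness theorem.
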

\begin{proof}
$(\Rightarrow)$ \cite[Corollary 5.3.12]{V2} and \cite[Theorem 1]{T+V} yield the result.

$(\Leftarrow)$ \cite[Theorem 1]{T+V} and Theorems \ref{2.1}  and \ref{theorem2.4} show that $Go\mathcal{H}$ is unmixed and shellable. Thus the result holds by Theorem 5.3.18  in \cite{V2}.
\end{proof}

Note that if $Go\mathcal{H}$ is vertex decomposable (shellable, Cohen-Macaulay), it doesn't need $G$ is vertex decomposable (shellable, Cohen-Macaulay). For instance, suppose that $G=C_4$ and $H=K_1$. Then by Theorem \ref{theorem2.4} and Corollary \ref{cor2.7} $GoH$ is vertex decomposable, shellable and Cohen-Macaulay but $G$ is none of them.

\subsection{Cartesian product ($G \square H$).}

The cartesian product $G \square H$ of $G$ and $H$ is a graph with $V(G\square H)= V(G) \times V(H)$ and
two vertices $(u,u' )$ and $(v,v' )$ are adjacent in $G \square H$ if and only if either
$u = v$ and $u'$ is adjacent to $v'$ in $H$, or
$u' = v'$ and $u$ is adjacent to $v$ in $G$.

Since cartesian product is a commutative operation and $G \square K_1=G$ for any graph $G$, we may assume that $|V(G)|, |V(H)|\geq 2$. Also, we know that $K_2 \square K_2=C_4$ which is not vertex decomposable (shellable). So even if both of $G$ and $H$ are vertex decomposable (shellable), $G\square H$ does not need to be vertex decomposable (shellable). Of course it is known that the cartesian product of two graphs are often fails to be vertex decomposable (shellable), but the following proposition shows that it is not impossible. So, next we are going to find a necessary condition for vertex decomposability of $G\square H$ which conclude a result about the girth of $G$ and $H$. 

\begin{proposition}\label{pro2.8}
For each integer $m\geq 3$, $K_m\square K_2$ is unmixed and vertex decomposable and so Cohen-Macaulay.
\end{proposition}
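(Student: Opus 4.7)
The plan is to first establish that $K_m\square K_2$ is unmixed with $\alpha=2$, then prove vertex decomposability by induction on $m$, and finally conclude Cohen-Macaulayness from the implications recalled in the introduction (vertex decomposable plus unmixed gives pure shellable, hence Cohen-Macaulay). Labelling the vertices as $(x_i,j)$ with $1\leq i\leq m$ and $j\in\{1,2\}$, note that since $K_m$ is complete, every independent set of $K_m\square K_2$ contains at most one vertex from each horizontal copy $\{(x_i,j):1\leq i\leq m\}$, and $(x_i,1)$ is always adjacent to $(x_i,2)$. Consequently the facets of $K_m\square K_2$ are exactly the pairs $\{(x_i,1),(x_j,2)\}$ with $i\neq j$, all of cardinality $2$, yielding $\alpha(K_m\square K_2)=2$ and unmixedness.

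For the inductive step, fix $m\geq 4$ and assume that $K_{m-1}\square K_2$ is vertex decomposable. Set $G:=K_m\square K_2$ and take $v=(x_1,1)$ as the candidate shedding vertex. Then $G\setminus N_G[v]$ is the complete graph $K_{m-1}$ on $\{(x_i,2):i\neq 1\}$, which is vertex decomposable by Theorem \ref{2.1}(2). To prove $G\setminus v$ is vertex decomposable, I shed $w=(x_1,2)$ inside it: since $(x_1,1)$ has already been removed, $N[w]$ in $G\setminus v$ reduces to $\{(x_i,2):1\leq i\leq m\}$, so $(G\setminus v)\setminus N[w]\cong K_{m-1}$, while $(G\setminus v)\setminus w\cong K_{m-1}\square K_2$ is vertex decomposable by the inductive hypothesis. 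Both shedding conditions then reduce to the observation that any singleton independent set from the respective $K_{m-1}$ link can be enlarged inside the corresponding deletion using a vertex of the opposite copy whose first coordinate avoids $1$ and one further index, which is possible precisely because $m\geq 3$.

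The base case $m=3$ must be handled separately. After shedding $v=(x_1,1)$, so that $G\setminus N_G[v]=K_2$, inside $G\setminus v$ I shed $(x_2,2)$ rather than $(x_1,2)$; the resulting link is then a single vertex and the resulting deletion is a path on four vertices, both vertex decomposable by Theorem \ref{2.1}(2), with the shedding conditions routine. Combining vertex decomposability with unmixedness yields pure shellability, and hence Cohen-Macaulayness. The principal obstacle is exactly this breakdown of the induction at $m=3$: naively continuing the inductive scheme would force one to invoke the vertex decomposability of $K_2\square K_2=C_4$, which fails by Theorem \ref{2.1}(1), so the base case must be dispatched with a distinct choice of second shedding vertex before the induction can take over for $m\geq 4$.
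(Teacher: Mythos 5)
Your proof is correct, and while it opens the same way as the paper's (both take $(x_1,1)$ as the shedding vertex and observe that the link $G\setminus N_G[(x_1,1)]$ is the complete graph on $\{(x_i,2): i\neq 1\}$), it diverges in how the deletion $G\setminus (x_1,1)$ is handled. The paper disposes of that deletion by appealing to an external result (Theorem 4.2 of \cite{G+W}, on the subgraph induced by vertices with minimal closed neighbourhood), so its stated induction on $m$ does little actual work; you instead shed a second vertex $(x_1,2)$ inside the deletion, whose own deletion is exactly $K_{m-1}\square K_2$, so your induction hypothesis is genuinely invoked and the argument stays self-contained modulo the standard facts (chordal graphs are vertex decomposable, vertex decomposability passes to disjoint unions). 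You also verify unmixedness directly by computing the facets $\{(x_i,1),(x_j,2)\}$, $i\neq j$, where the paper cites Proposition 15 of \cite{T+V}, and you work out the base case $m=3$ explicitly (correctly switching to $(x_2,2)$ as the second shedding vertex so as to land on $K_1$ and $P_4$ rather than on $C_4=K_2\square K_2$), whereas the paper merely asserts that $m=3$ ``can be easily checked.'' The paper's route is shorter at the cost of two citations; yours is longer but elementary and makes the failure of the naive induction at $m=2$ explicit.
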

\begin{proof}
In view of Proposition 15 in \cite{T+V}, we only need to prove the vertex decomposability of $G=K_m\square K_2$ where $m\geq 3$ and $V(G)=\{(i,j) \ | \ 1\leq i \leq m, 1\leq j \leq 2\}$. We proceed by induction on $m$. For $m=3$ it can be easily checked that $G$ is vertex decomposable. Now suppose that $m>3$ and for each integer $3\leq m'<m$, $K_{m'}\square K_2$ is vertex decomposable. Note that $G\setminus N_G[(1,1)]$ is the complete graph with vertex set $\{(i,2) \  | \ 2\leq i \leq m\}$ and so is vertex decomposable. Also $G\setminus (1,1)$ is $K_{m-1}\square K_2$ with additional vertex $(1,2)$ such that it is joined to each vertex $(i,2)$ for $2\leq i \leq m$. Since $m>3$, for each maximal independent set $\{(i,2)\}$ in  $G\setminus N_G[(1,1)]$, there is a vertex $(j,1)$ such that $\{(i,2), (j,1)\}$ is independent in $G\setminus (1,1)$. Now we need to show that $G'=G\setminus (1,1)$ is vertex decomposable. It is easy to see that the set of vertices of $G'$ with minimal closed neighbourhood induce a disjoint union of a complete graph whose set of vertices is $\{(i,1)|2\leq i\leq m\}$ and a vertex $(1, 2)$. So $G'$ is vertex decomposable by Theorem 4.2 in \cite{G+W}.

\end{proof}
 
For our next result we need the following lemma.
\begin{lemma}\label{2.4}
If $(x,y)$ is a shedding vertex of $G\square H$, then $x$ is a shedding vertex of $G$ or $y$ is a shedding vertex of $H$.
\end{lemma}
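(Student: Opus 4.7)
I would argue by contrapositive: assuming neither $x$ is a shedding vertex of $G$ nor $y$ is a shedding vertex of $H$, I construct an independent set of $(G\square H)\setminus N_{G\square H}[(x,y)]$ that is simultaneously a maximal independent set of $(G\square H)\setminus(x,y)$, violating the shedding property of $(x,y)$. By hypothesis I fix a maximal independent set $A$ of $G\setminus x$ with $A\cap N_G[x]=\emptyset$ and a maximal independent set $B$ of $H\setminus y$ with $B\cap N_H[y]=\emptyset$.

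The candidate I would use is
\[
S:=\bigl((A\cup\{x\})\times(B\cup\{y\})\bigr)\setminus\{(x,y)\}.
\]
Because $A\cup\{x\}$ and $B\cup\{y\}$ are independent in $G$ and $H$ respectively (thanks to $A\cap N_G[x]=\emptyset$ and $B\cap N_H[y]=\emptyset$), a direct check of the two kinds of cartesian-product edges shows that $S$ is independent in $G\square H$. Moreover $S\cap N_{G\square H}[(x,y)]=\emptyset$: a point of the product distinct from $(x,y)$ can lie in this neighborhood only as $(x,b)$ with $b\in B$ and $b\sim_H y$, or as $(a,y)$ with $a\in A$ and $a\sim_G x$, and both possibilities are excluded by the choices of $A$ and $B$.

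Next I extend $S$ to a maximal independent set $S^{\ast}$ of $(G\square H)\setminus(x,y)$ and show the enlargement still avoids $N_{G\square H}[(x,y)]$. Any vertex in $N_{G\square H}[(x,y)]\setminus\{(x,y)\}$ has the form $(x,b)$ with $b\in N_H(y)$ or $(a,y)$ with $a\in N_G(x)$. In the first case $b\notin B\cup\{y\}$ since $B\cap N_H[y]=\emptyset$, so maximality of $B$ in $H\setminus y$ produces a $b'\in B$ with $b\sim_H b'$; then $(x,b')\in S$ and $(x,b)\sim(x,b')$ in $G\square H$, which forbids $(x,b)$ from joining $S^{\ast}$. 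The case $(a,y)$ is handled symmetrically via maximality of $A$. Hence $S^{\ast}$ is independent in $(G\square H)\setminus N_{G\square H}[(x,y)]$ and maximal in $(G\square H)\setminus(x,y)$, contradicting that $(x,y)$ is a shedding vertex.

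The main obstacle is picking $S$ large enough that no enlargement to a maximal independent set can reach the rim $N_{G\square H}[(x,y)]\setminus\{(x,y)\}$. Smaller natural candidates such as $A\times B$ or $(A\times\{y\})\cup(\{x\}\times B)$ are independent and miss the closed neighborhood, but leave rim vertices without any neighbor and so permit a maximal extension to drift into the rim. The full product $(A\cup\{x\})\times(B\cup\{y\})$, minus the single vertex $(x,y)$, supplies exactly the padding so that the maximality of $A$ in $G\setminus x$ and of $B$ in $H\setminus y$ forces every rim vertex to have a neighbor already in $S$.
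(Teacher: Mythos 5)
Your proof is correct and follows essentially the same contrapositive strategy as the paper: seed an independent set avoiding $N_{G\square H}[(x,y)]$ whose construction forces every rim vertex $(x,b)$ with $b\in N_H(y)$ and $(a,y)$ with $a\in N_G(x)$ to already have a neighbour in the seed, and then extend to a maximal independent set that therefore stays off the rim. One correction to your closing remark: the smaller candidate $(A\times\{y\})\cup(\{x\}\times B)$ does dominate the rim (maximality of $A$ in $G\setminus x$ gives each $a\in N_G(x)$ a neighbour $a'\in A$, whence $(a',y)$ is adjacent to $(a,y)$, and symmetrically for $B$), and it is exactly the set the paper uses; your extra $A\times B$ padding is harmless but not needed.
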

\begin{proof}
Suppose in contrary that $(x,y)$ is a shedding vertex of $K=G\square H$ but neither $x$ is a shedding vertex of $G$ nor $y$ is a shedding vertex of $H$.
Assume that $N_G(x)=\{x_1, \dots , x_m\}$ and $N_H(y)=\{y_1, \dots , y_n\}$. Then there exist independent sets of vertices $S_1$ in $G\setminus N_G[x]$ and $S_2$ in $H\setminus N_H[y]$ such that for each $1\leq i \leq m$ and $1\leq j \leq n$, $S_1\cup \{x_i\}$ and $S_2\cup \{y_j\}$ are not independent. Hence 
$S=\{(z,y) \ | \ z\in S_1\}\cup \{(x,z) \ | \ z\in S_2\}$ is an independent set of vertices in $K \setminus N_K[(x,y)]$. Since $S_1\cup \{x_i\}$ is not independent, for each $1\leq i \leq m$,  $(z,y)$ is adjacent to $(x_i,y)$ for some $z\in S_1$. Similarly since $S_2\cup \{y_j\}$ is not independent,  for each $1\leq j \leq n$, $(x,z')$ is adjacent to $(x,y_j)$ for some $z'\in S_2$. Hence since 
$$N_K((x,y))=\{(x, y_1), \dots , (x, y_n), (x_1, y), \dots , (x_m, y)\},$$
 if $S'$ is a maximal independent set of vertices in $K\setminus N_K[(x,y)]$ containing $S$, then it is also a maximal independent set of vertices in $K\setminus (x,y)$ which is a contradiction.
\end{proof}

\begin{proposition}\label{2.5}
Suppose that $G$ and $H$ are simple graphs with no isolated vertices in which the neighbourhood of each shedding vertex is independent. Then $G\square H$ doesn't have any shedding vertex and so it is not vertex decomposable.
\end{proposition}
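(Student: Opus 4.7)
The plan is to argue by contradiction. Suppose $(x,y)$ is a shedding vertex of $G\square H$. Lemma~\ref{2.4} then forces, without loss of generality, $x$ to be a shedding vertex of $G$, so by hypothesis $N_G(x)=\{x_1,\dots,x_m\}$ is independent in $G$. The goal is to exhibit an independent set $I$ of $(G\square H)\setminus N_{G\square H}[(x,y)]$ which is already maximal in $(G\square H)\setminus(x,y)$; such an $I$ contradicts the shedding property of $(x,y)$, and since $G\square H$ is obviously not totally disconnected (both factors have edges), this will contradict its vertex decomposability.

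Since $H$ has no isolated vertices I would fix some $y_1\in N_H(y)$ and split on whether $y$ is itself a shedding vertex of $H$. In the first case the hypothesis supplies independence of $N_H(y)=\{y_1,\dots,y_n\}$, and I would take
\[
S:=N_G(x)\times N_H(y),
\]
which is independent in $G\square H$ by the Cartesian adjacency rule and lies in $(G\square H)\setminus N_{G\square H}[(x,y)]$ because no vertex of $S$ agrees with $(x,y)$ in either coordinate. In the second case the negation of the shedding condition at $y$ yields an independent set $J$ of $H\setminus N_H[y]$ that is maximal in $H\setminus y$, and I would instead take
\[
S:=\bigl(N_G(x)\times\{y_1\}\bigr)\cup\bigl(\{x\}\times J\bigr);
\]
independence of $S$ follows from the independence of $N_G(x)$ and $J$ together with the observation that $y_1\in N_H(y)$ is distinct from every element of $J\subseteq V(H)\setminus N_H[y]$, and $S$ again avoids $N_{G\square H}[(x,y)]$. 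In both cases I would then extend $S$ to a maximal independent set $I$ in $(G\square H)\setminus N_{G\square H}[(x,y)]$.

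The remaining and most delicate step is to check that $I$ still blocks every vertex of $N_{G\square H}((x,y))=\{(x_i,y):1\le i\le m\}\cup\{(x,y_j):1\le j\le n\}$. Each $(x_i,y)$ is adjacent to $(x_i,y_1)\in S$ via the same-first-coordinate rule and so is handled uniformly. For a vertex $(x,y_j)$: in the first case $(x_1,y_j)\in S$ is adjacent via the same-second-coordinate rule, whereas in the second case the maximality of $J$ in $H\setminus y$, combined with $y_j\in N_H(y)$ (so $y_j\notin J$), forces some $z\in J$ with $z\sim y_j$ in $H$, whence $(x,z)\in S$ is adjacent to $(x,y_j)$. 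I expect the main obstacle to be precisely this second case: when $y$ fails to be shedding in $H$, the neighbourhood $N_H(y)$ need not be independent and the naive choice $N_G(x)\times N_H(y)$ may cease to be independent in $G\square H$; the key insight is that the very failure of the shedding property at $y$ hands one a ready-made blocker $J$ for the vertical slice $\{x\}\times N_H(y)$.
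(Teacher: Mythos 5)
Your proposal is correct and follows essentially the same route as the paper: it invokes Lemma~\ref{2.4}, splits into the same two cases (both coordinates shedding, versus only $x$ shedding), and constructs the very same blocking sets $N_G(x)\times N_H(y)$ and $(N_G(x)\times\{y_1\})\cup(\{x\}\times J)$ before extending to a maximal independent set that contradicts the shedding property. The only cosmetic difference is that you reach the case split via a symmetry-based ``without loss of generality'' rather than listing the cases directly.
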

\begin{proof}
Set $K=G\square H$. Suppose in contrary that $K$ has a shedding vertex, say $(x,y)$. 
Assume that $N_G(x)=\{x_1, \dots , x_m\}$ and $N_H(y)=\{y_1, \dots , y_n\}$. Then 
$$N_K((x,y))=\{(x, y_1), \dots , (x, y_n), (x_1, y), \dots , (x_m, y)\}.$$
In the light of Lemma \ref{2.4} we may consider the following two cases:

Case I. $x$ and $y$ are shedding vertices of $G$ and $H$ respectively. 

By our assumption
$S=\{(x_i, y_j) \ | \ 1\leq i \leq m , 1\leq j \leq n\}$
is an independent set of vertices in $K\setminus N_K[(x,y)]$. But for each $1\leq i \leq m$ and $1\leq j \leq n$, $(x,y_j)$ and $(x_i,y)$ are adjacent to $(x_i,y_j)$. 

Case II. $x$ is a shedding vertex of $G$ and $y$ is not a shedding vertex of $H$. 

Then $N_G(x)$ is an independent set of vertices in $G$ and there exists an independent set of vertices $S_2$ in $H\setminus N_H[y]$ such that for each $1\leq j \leq n$, $S_2\cup \{y_j\}$ is not independent. Hence 
$S=\{(x_i,y_1) \ | \ 1\leq i \leq m\}\cup \{(x,z) \ | \ z\in S_2\}$ is an independent set of vertices in $K\setminus N_K[(x,y)]$. Since $S_2\cup \{y_j\}$ is not independent,  for each $1\leq j \leq n$, $(x,y_j)$ is adjacent to $(x,z)$ for some $z\in S_2$. Also, it is clear that $(x_i,y)$ is adjacent to $(x_i,y_1)$ for each $1\leq i \leq m$. 
 
 Hence in each status if $S'$ is a maximal independent set of vertices in $K\setminus N_K[(x,y)]$ containing $S$, then it is also a maximal independent set of vertices in $K\setminus (x,y)$ which is a contradiction.
\end{proof}

It is evident that a graph $G$ is $C_3$-free if and only if the neighbourhood of any vertex in $G$ is independent. The minimum length of cycles in $G$ is called its girth and is denoted by $\mathrm{girth}G$. The following corollary, which gives some information about the girth of two graphs by means of vertex decomposability of their cartesian product, is an immediate consequence of Proposition \ref{2.5}.

\begin{corollary}\label{cor2}
If $G$ and $H$ are two $C_3$-free graphs with no isolated vertices,  then $G\square H$ is not vertex decomposable. Hence if $G$ and $H$ have no isolated vertex and $G\square H$ is vertex decomposable, then
$$\min\{\mathrm{girth}G, \mathrm{girth}H\}=3.$$
\end{corollary}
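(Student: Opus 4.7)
The plan is to derive both assertions essentially as restatements of Proposition \ref{2.5}. First I would recall the standard fact noted in the paragraph just before the corollary: a graph is $C_3$-free precisely when the open neighbourhood of every vertex is an independent set. Consequently, if $G$ and $H$ are both $C_3$-free, then the neighbourhood of every vertex---and in particular of every shedding vertex---is independent, so the hypothesis of Proposition \ref{2.5} holds automatically for the pair $(G,H)$.

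Next I would apply Proposition \ref{2.5} directly to conclude that $G\square H$ has no shedding vertex. To promote this to ``not vertex decomposable'', one must rule out the other route in the definition, namely that $G\square H$ is totally disconnected. But since neither $G$ nor $H$ has an isolated vertex, $G$ has an edge $\{u,v\}$ and $H$ has some vertex $w$, and then $(u,w)(v,w)$ is an edge of $G\square H$. Thus $G\square H$ is not totally disconnected, and in the absence of a shedding vertex it cannot be vertex decomposable. This establishes the first assertion.

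For the second assertion I would argue by contraposition. Suppose $G$ and $H$ have no isolated vertices and $G\square H$ is vertex decomposable. By the first part, $G$ and $H$ cannot both be $C_3$-free, so at least one of them contains a triangle, forcing $\mathrm{girth}\, G = 3$ or $\mathrm{girth}\, H = 3$, hence $\min\{\mathrm{girth}\, G,\mathrm{girth}\, H\} = 3$. The only thing that is not entirely mechanical here is noticing that one must separately exclude total disconnectedness to turn ``no shedding vertex'' into ``not vertex decomposable''; this is the one step where the hypothesis on isolated vertices is used, so it should be flagged rather than glossed over.
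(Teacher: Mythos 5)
Your proof is correct and takes essentially the same route as the paper, which derives the corollary as an immediate consequence of Proposition \ref{2.5} via the equivalence between $C_3$-freeness and independence of every open neighbourhood. The one step you flag---excluding total disconnectedness to pass from ``no shedding vertex'' to ``not vertex decomposable''---is already contained in the stated conclusion of Proposition \ref{2.5}, so while your careful treatment of it is harmless, it is not an additional gap that needs filling.
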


Although in Corollary \ref{cor2} we prepare a necessary condition for vertex decomposability of $G \square H$, but this is not a sufficient condition. For instance suppose that $K=C_3 \square C_3$. Then for each vertex $(x,y)$ in $K$, $K\setminus N_K[(x,y)]$ is a 4-cycle. Hence it is not vertex decomposable, so is not $K$. Note that in this case none of the vertices of $K$ is a shedding vertex.

The following corollary, which investigates the cartesian product of cycles, immediately follows from Corollary \ref{cor2}, \cite[Corollary 5.3.12]{V2} and \cite[Corollary 12]{T+V}.
\begin{corollary}
If $C_n\square C_m$ is vertex decomposable or Cohen-Macaulay, then $n=3$ or $m=3$.
\end{corollary}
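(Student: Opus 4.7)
The plan is to handle the two hypotheses separately, because vertex decomposability and Cohen-Macaulayness are not comparable in general for graphs that are not pure, and the author has flagged two distinct references for the Cohen-Macaulay case.

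For the vertex decomposable case I would argue directly from Corollary \ref{cor2}. Assume for contradiction that $C_n\square C_m$ is vertex decomposable but $n\geq 4$ and $m\geq 4$. Then both $C_n$ and $C_m$ are cycles of length at least $4$, so each is $C_3$-free and has no isolated vertex. Corollary \ref{cor2} then says $C_n\square C_m$ is not vertex decomposable, a contradiction. Hence $n=3$ or $m=3$.

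For the Cohen-Macaulay case the route goes through unmixedness rather than through vertex decomposability (since Cohen-Macaulay graphs need not be vertex decomposable). If $C_n\square C_m$ is Cohen-Macaulay then by \cite[Corollary 5.3.12]{V2} it is unmixed, and then \cite[Corollary 12]{T+V}, which characterises when $C_n\square C_m$ is unmixed, forces $n=3$ or $m=3$. Combining the two parts gives the statement.

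The main obstacle, if any, is purely bookkeeping: making sure that Corollary \ref{cor2} is applied with its hypotheses genuinely verified (no isolated vertices, $C_3$-freeness, which both hold for $C_n$ with $n\geq 4$), and citing the right unmixedness result for $C_n\square C_m$ in the Cohen-Macaulay branch. No new combinatorial construction is required; the proof is essentially a two-line invocation once Corollary \ref{cor2} is in hand.
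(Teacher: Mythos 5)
Your proposal is correct and follows essentially the same route as the paper: the vertex decomposable case is an immediate application of Corollary \ref{cor2} (noting that $C_n$ with $n\geq 4$ is $C_3$-free and has no isolated vertices), and the Cohen--Macaulay case passes through unmixedness via \cite[Corollary 5.3.12]{V2} and the characterisation in \cite[Corollary 12]{T+V}. The paper states the corollary as an immediate consequence of exactly these three ingredients, so nothing further is needed.
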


\subsection{Lexicographic product ($G[\mathcal{H}]$).}
The lexicographic product $G[\mathcal{H}]$ of $G$ and $\mathcal{H}$, which was first studied by Hausdorff \cite{H}, is a graph with 
$$V(G[\mathcal{H}])=\bigcup_{x\in V(G)} \{(x, y) \ | \  y\in V(H_x)\}.$$
Two vertices $(x,y)$ and $(x',y')$ are adjacent if either $x=x'$ and $y$ is adjacent to $y'$ in $H_x$ or $x$ is adjacent to $x'$ in $G$. If all the graphs in $\mathcal{H}$ are isomorphic to one graph, say $H$, then $G[\mathcal{H}]$ is denoted by $G[H]$.

In \cite[Theorem 1.1]{Vantuyl} (see also \cite[Lemma 4.51]{Hoshino}) vertex decomposability and shellability of $G[H]$ is studied in pure case. More precisely for its shellability, its well-coveredness  is used and then vertex decomposability is studied via shellability. Here we establish our results without the strong condition of well-coveredness. Also we attend $G[\mathcal{H}]$ in stead of $G[H]$ which covers a larger class of graphs. We start with shellability. It is clear that all facets of $G[\mathcal{H}]$ are in form of $(\{x_{i_1}\}\times F_{i_1})\cup \dots \cup (\{x_{i_k}\}\times F_{i_k})$ for some facet  $F=\{x_{i_1}, \dots , x_{i_k}\}$ of $G$ such that for each $1\leq r \leq k$, $F_{i_r}$ is a facet of $H_{x_{i_r}}$. We call it as a facet associated to the facet $F$. So the following result is a generalization of shellable part of \cite[Theorem 1.1]{Vantuyl}, \cite[Lemma 4.51]{Hoshino} and \cite[Example 28]{Kengo}.

\begin{theorem}\label{shell}
\begin{itemize}
\item[1.] If $G$ is totally disconnected, then $G[\mathcal{H}]$ is shellable if and only if $H_x$ is shellable for all $x\in V(G)$.
\item[2.] If $G[\mathcal{H}]$ is shellable, then $H_x$ is shellable for all $x\in V(G)$.
\item[3.] If $G[\mathcal{H}]$ is shellable, then for each edge $\{x,y\}$ of $G$,  $H_x$ is complete or $H_y$ is complete.
\end{itemize} 
\end{theorem}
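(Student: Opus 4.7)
Part~1 is immediate: when $G$ has no edges the sets $\{x\}\times V(H_x)$ for $x\in V(G)$ are pairwise non-adjacent in $G[\mathcal{H}]$, so $G[\mathcal{H}]$ is the disjoint union of the $H_x$, and Lemma~\ref{lemma2.4} applied iteratively yields the equivalence. For Parts~2 and~3 my strategy is the same: realize the relevant graph as the induced subgraph underlying the link of a carefully chosen face of $\Delta_{G[\mathcal H]}$, and then use the classical fact that links of faces of shellable complexes are themselves shellable, together with Proposition~\ref{2.2}.

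For Part~2 fix $x\in V(G)$, pick any facet $F$ of $G$ containing $x$ (obtained by extending $\{x\}$ to a maximal independent set), choose for each $z\in F\setminus\{x\}$ a facet $F_z$ of $H_z$, and let $T=\bigcup_{z\in F\setminus\{x\}}\{z\}\times F_z$; this $T$ is independent in $G[\mathcal H]$. I claim that $\mathrm{link}_{\Delta_{G[\mathcal H]}}(T)=\Delta_{H_x+G|_N[\mathcal H|_N]}$, where $N=\{z\in N_G(x):z\text{ is non-adjacent in }G\text{ to every }f\in F\setminus\{x\}\}$ (and the join degenerates to $H_x$ when $N=\emptyset$). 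The verification uses three ingredients: each $F_z$ is maximal in $H_z$, so every $(z,w)\notin T$ with $z\in F\setminus\{x\}$ is already adjacent to $T$; $F$ is a facet of $G$, so every $v\in V(G)\setminus F$ has a neighbour in $F$; and only those $v$ whose unique $F$-neighbour is $x$ survive in the link. Once the link is identified, Proposition~\ref{2.2} (or triviality when $N=\emptyset$) forces $H_x$ to be shellable.

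For Part~3 I use the same construction with the aim of putting $y$ into the set $N$. If a facet $F$ of $G$ containing $x$ can be chosen so that $(F\setminus\{x\})\cap N_G(y)=\emptyset$, then $y\in N$, and Proposition~\ref{2.2} applied to the link $\Delta_{H_x+G|_N[\mathcal H|_N]}$ yields either $H_x$ complete (and we are done) or $G|_N[\mathcal H|_N]$ complete; in the latter alternative every $H_z$ with $z\in N$ must be complete, so in particular $H_y$ is complete. The main obstacle is securing the hypothesis on $F$. When no such facet exists through $x$---and by symmetry none through $y$---I plan to split further: if $V(G)\setminus N_G[\{x,y\}]$ contains a vertex $z$, the link of a single vertex $(z,w)\in\{z\}\times V(H_z)$ reduces the problem to a lexicographic product over a strictly smaller graph that still carries the edge $\{x,y\}$, and an induction on $|V(G)|$ applies; otherwise $V(G)=N_G[\{x,y\}]$, and if every vertex outside $\{x,y\}$ is a neighbour of both $x$ and $y$ then $G[\mathcal H]$ decomposes as the join $(H_x+H_y)+K$ to which Proposition~\ref{2.2} applies twice. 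The remaining ``mixed'' configuration, in which some external vertex is adjacent to exactly one of $x,y$, is the genuine difficulty and will require a more direct analysis of the shelling.
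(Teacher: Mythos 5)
Your Part 1 coincides with the paper's. Your Part 2 is a correct but genuinely different route: the paper extracts a shelling of $H_x$ directly, by fixing a facet $F\ni x$ of $G$ and facets $F_{i_2},\dots,F_{i_k}$ of the other $H_{x_{i_r}}$'s and showing that the induced order on the facets $(\{x\}\times F_i)\cup(\{x_{i_2}\}\times F_{i_2})\cup\dots\cup(\{x_{i_k}\}\times F_{i_k})$ restricts to a shelling of $H_x$; you instead pass to the link of the face $T=\bigcup_{z\in F\setminus\{x\}}\{z\}\times F_z$, identify it as $\Delta_{H_x+(G|_N)[\mathcal{H}|_N]}$ (this identification is correct -- maximality of the $F_z$ kills the vertices over $F\setminus\{x\}$ and maximality of $F$ kills every vertex over $V(G)\setminus F$ except those over $N$), and then invoke Proposition \ref{2.2}. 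This is cleaner, but it rests on the fact that every link of a nonpure shellable complex is shellable, which the paper never uses and which you must state and cite (Bj\"orner--Wachs); with that citation Part 2 is sound.

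The genuine gap is in Part 3, and you have named it yourself: the ``mixed'' configuration is left unproved, and it is not vacuous. Take $G=P_4$ with vertices $u$--$x$--$y$--$v$ and the middle edge $\{x,y\}$. The only facet of $G$ through $x$ is $\{x,v\}$ and $v\in N_G(y)$; symmetrically the only facet through $y$ is $\{y,u\}$ and $u\in N_G(x)$; there is no vertex outside $N_G[\{x,y\}]$ to induct on; and $u,v$ are each adjacent to exactly one of $x,y$. So none of your reductions applies, and indeed no nonempty face of $\Delta_{G[\mathcal{H}]}$ has a link containing all of $(\{x\}\times V(H_x))\cup(\{y\}\times V(H_y))$, so the link strategy cannot isolate $H_x+H_y$ as a join factor here. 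An argument that cannot decide the middle edge of $P_4[\mathcal{H}]$ proves strictly less than the statement. The paper avoids the case analysis entirely by working with the shelling itself: choose non-singleton facets $F_1$ of $H_x$ and $F_2$ of $H_y$, let $K_1$ (resp.\ $K_2$) be the first facet of the fixed shelling of $G[\mathcal{H}]$ containing $\{x\}\times F_1$ (resp.\ $\{y\}\times F_2$), note $K_1\neq K_2$ since $x$ and $y$ are adjacent, and derive a contradiction from the shelling condition $|K_2\setminus K|=1$ in either order; this uses nothing about $G$ beyond the single edge $\{x,y\}$. You need either to adopt such a direct argument for the mixed case or to supply the ``more direct analysis'' you defer, since as written Part 3 is incomplete.
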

\begin{proof}
1. In this case it is obvious that $G[\mathcal{H}]$ is isomorphic to disjoint union of $H_x$s. Hence the result holds by Lemma \ref{lemma2.4}. 

2. In view of Lemma \ref{lemma2.4} we may assume that $G$ is connected. Fix $x\in V(G)$ and a facet  $F=\{x_{i_1}, \dots , x_{i_k}\}$ of $G$ containing $x$. Without loss of generality assume $x_{i_1}=x$. Suppose that $F_1, \dots, F_m$ are all facets of $H_x$ such that for a fixed shelling of $G[\mathcal{H}]$ and fixed facets $F_{i_2},\dots , F_{i_k}$ of $H_{x_{i_2}}, \dots , H_{x_{i_k}}$ respectively, we have 
\begin{align*}
F_1'&=(\{x\}\times F_1) \cup (\{x_{i_2}\}\times F_{i_2})\cup \dots \cup (\{x_{i_k}\}\times F_{i_k})<\\
F'_2&=(\{x\}\times F_2) \cup (\{x_{i_2}\}\times F_{i_2})\cup \dots \cup (\{x_{i_k}\}\times F_{i_k})<\\
&\vdots \\
F_m'&=(\{x\}\times F_m) \cup (\{x_{i_2}\}\times F_{i_2})\cup \dots \cup (\{x_{i_k}\}\times F_{i_k}).
\end{align*}
We claim that $F_1, \dots , F_m$ is a shelling of $H_x$. Suppose $F_i<F_j$. Then $F_i'<F_j'$. Hence there is a vertex $(x', y')\in F'_j\setminus F'_i$ and a facet $F'<F'_j$ such that $F'_j\setminus F'=\{(x',y')\}$. It is clear that $x'=x$ and $y'\in F_j\setminus F_i$. If $F'$ is associated to a facet of $G$ that does not contain $x$, then we have  $\{x\}\times F_j\subseteq F'_j\setminus F'=\{(x,y')\}$. This implies that $F_j= \{y'\}$ and so $F_j\setminus F_i=\{y'\}$ as desired. Else, $F'$ is a facet of $G[\mathcal{H}]$ associated to a facet of $G$ containing $x$.  Then we should have $F'=(\{x\}\times F_\ell) \cup S$  for some $\ell$ and $S$. Since $F'_j\setminus F'=\{(x,y')\}$, we have
$$\{(x,y')\}=(\{x\}\times (F_j\setminus F_\ell))\cup \left( (\{x_{i_2}\}\times F_{i_2})\cup \dots \cup (\{x_{i_k}\}\times F_{i_k})\setminus S\right).$$
Therefore $F_j\setminus F_\ell=\{y'\}$ and $S= (\{x_{i_2}\}\times F_{i_2})\cup \dots \cup (\{x_{i_k}\}\times F_{i_k})$ since $F$ and $F_{i_r}$s are facets. Hence $F'=F_\ell'$ for some $1\leq \ell < j$ as desired.  

3. In the light of Lemma \ref{lemma2.4} we may assume that $G$ is connected. Assume that $G[\mathcal{H}]$ is shellable and in contrary there are adjacent vertices $x_1$ and $x_2$  of $G$ such that $H_{x_1}$ and $H_{x_2}$ are not complete. Since $H_{x_1}$ (resp. $H_{x_2}$) is not complete, it has a facet with more than one vertex, say $F_1$ (resp. $F_2$). Now assume that in a fixed shelling of $G[\mathcal{H}]$, $K_1$ (resp. $K_2$) is the first facet of $G[\mathcal{H}]$ containing $\{x_1\}\times F_1$ (resp. $\{x_2\}\times F_2$). Since $x_1$ is adjacent to $x_2$, $K_1$ and $K_2$ are distinct. If $K_1<K_2$, then there should exist a facet $K$ of $G[\mathcal{H}]$ with $K<K_2$ such that $|K_2\setminus K|=1$. Now two cases occure:

$\bullet$ If $K$ includes $\{x_2\}\times F_2'$ for some facet $F_2'$ of $H_{x_2}$, then since $|F_2|>1$, $F_2'$ is a facet and 
$$|F_2\setminus F_2'|=|\{x_2\}\times (F_2\setminus F_2')|\leq |K_2\setminus K|=1,$$
$F_2'$ should have more than one vertex which contradicts with our assumption on $K_2$. 

$\bullet$ If $K$ is associated to a facet of $G$ that does not contain $x_2$, then 
$$1<|F_2|=|\{x_2\}\times F_2|\leq |K_2\setminus K|=1,$$
which is again a contradiction.

Hence $K_1<K_2$ is impossible. By similar argument one can see that $K_2<K_1$ is also impossible. This completes the proof.
\end{proof}

The following corollary can be immediately follows from Theorem \ref{shell}. Recall that a subset $S$ of $V(G)$ is a vertex cover of $G$ if each edge of $G$  has at least one vertex in $S$. Also the vertex covering number of $G$, $\alpha (G)$, is the minimum cardinality of vertex covers of $G$.

\begin{corollary}
\begin{itemize}
\item[1.] If $|\{x\in V(G) \ | \ H_x \text{ is complete}\}|<\alpha (G)$, then $G[\mathcal{H}]$ is not shellable.

\item[2.] If $G[\mathcal{H}]$ is shellable, then $\{x\in V(G) \ | \ H_x \text{ is complete}\}$ is a vertex cover of $G$.
\end{itemize}
\end{corollary}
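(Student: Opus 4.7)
The plan is to derive both statements as essentially immediate consequences of Theorem \ref{shell}(3), which says that in any shellable $G[\mathcal{H}]$ every edge of $G$ has at least one endpoint $x$ with $H_x$ complete. Writing $S=\{x\in V(G) \ | \ H_x \text{ is complete}\}$, this is exactly the assertion that $S$ meets every edge of $G$, i.e. that $S$ is a vertex cover of $G$. This gives Part 2 directly.

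For Part 1 I would simply argue by contraposition from Part 2. By definition of the vertex covering number, any vertex cover of $G$ has cardinality at least $\alpha(G)$ (using the convention established just before the corollary statement). Thus if $|S|<\alpha(G)$, then $S$ cannot be a vertex cover of $G$, so Part 2 forces $G[\mathcal{H}]$ to fail to be shellable.

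There is no real obstacle here; the content is entirely packaged into Theorem \ref{shell}(3), and the only care needed is to keep the two meanings of $\alpha(G)$ used in the paper straight (the independence number from the introduction versus the vertex covering number introduced immediately before the corollary). In the proof one should explicitly say ``$S:=\{x\in V(G) : H_x \text{ is complete}\}$ is a vertex cover by Theorem \ref{shell}(3)'' and then invoke the minimality definition of $\alpha(G)$ to conclude.
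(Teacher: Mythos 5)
Your proposal is correct and matches the paper's intent exactly: the paper states the corollary ``immediately follows from Theorem \ref{shell},'' and your argument (Part 2 from Theorem \ref{shell}(3) read as a vertex-cover statement, Part 1 by contraposition via the minimality of the vertex covering number) is precisely that immediate deduction. Your remark about keeping the two uses of $\alpha(G)$ straight is a fair observation about the paper's notation, but the proof itself is the same.
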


In \cite[Theorem 3.12]{SF} it is shown that if $G$ is a shellable graph and $H_x$s are complete, then $G[\mathcal{H}]$ is also shellable. Note that its converse is not generally true, because for example it can be checked that $C_4[\{K_2, K_1, K_1, K_1\}]$ and $K_2[P_3,K_2]$ are shellable, while $C_4$ is not shellable and $P_3$ is not complete. 

In the following result we provide circumstances under which $G[\mathcal{H}]$ is shellable.  Examples \ref{example} illustrate that this result is  a generalization of Theorem 3.12 in \cite{SF}.

\begin{theorem}\label{shell2}
Suppose that $H_x$ is shellable for every $x\in V(G)$ and $G$ is a shellable graph with a shelling in which for all facets $F<F'$ there exist a vertex $u\in F'\setminus F$ and a facet $F''<F'$ such that $F'\setminus F''=\{u\}$ and $H_u$ is complete. Then $G[\mathcal{H}]$ is also shellable.
\end{theorem}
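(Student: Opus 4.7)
The plan is to construct an explicit shelling of $G[\mathcal{H}]$ by a lexicographic order built from the given shelling of $G$ and the given shellings of the $H_x$. Fix the shelling of $G$ with the stated property, and, for each $x\in V(G)$, fix a shelling of $H_x$. For two facets $K$ and $K'$ of $G[\mathcal{H}]$ associated to facets $F$ and $F'$ of $G$, I will set $K<K'$ if $F<F'$ in the shelling of $G$. If $F=F'=\{x_{i_1},\dots,x_{i_k}\}$ with $i_1<\dots<i_k$, write $K=\bigcup_{r=1}^k(\{x_{i_r}\}\times F_{i_r})$ and $K'=\bigcup_{r=1}^k(\{x_{i_r}\}\times F'_{i_r})$, and order $K,K'$ lexicographically: take the smallest $r$ with $F_{i_r}\neq F'_{i_r}$ and use the order of these facets in the shelling of $H_{x_{i_r}}$ to compare $K$ and $K'$.

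Next I will verify the shelling condition for $K<K'$ in two cases. \textbf{Case 1: $F=F'$.} Let $j$ be the first index where the tuples of $H_{x_{i_r}}$-facets of $K$ and $K'$ differ, so $F_{i_j}<F'_{i_j}$ in the shelling of $H_{x_{i_j}}$. That shelling supplies $y\in F'_{i_j}\setminus F_{i_j}$ and a facet $F''_{i_j}<F'_{i_j}$ with $F'_{i_j}\setminus F''_{i_j}=\{y\}$. Setting $K'':=(\{x_{i_j}\}\times F''_{i_j})\cup\bigcup_{r\neq j}(\{x_{i_r}\}\times F'_{i_r})$ gives a facet of $G[\mathcal{H}]$ with $K''<K'$, $(x_{i_j},y)\in K'\setminus K$, and $K'\setminus K''=\{(x_{i_j},y)\}$, exactly as required.

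\textbf{Case 2: $F<F'$ in $G$.} By the hypothesis on the shelling of $G$, there exist $u\in F'\setminus F$ and a facet $F''<F'$ of $G$ with $F'\setminus F''=\{u\}$ and $H_u$ complete. Write the $H_u$-part of $K'$ as $\{u\}\times F'_u$; since $H_u$ is complete, every facet of $H_u$ is a single vertex, so $F'_u=\{y\}$ for some $y\in V(H_u)$. Define $K'':=K'\setminus\{(u,y)\}$. Because $F''=F'\setminus\{u\}$ is a facet of $G$ and the remaining $H_x$-facets in $K'$ (for $x\in F''$) still serve as facets of their respective $H_x$, $K''$ is a genuine facet of $G[\mathcal{H}]$ associated to $F''$. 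Since $F''<F'$ in $G$'s shelling we have $K''<K'$, and clearly $(u,y)\in K'\setminus K$ (as $u\notin F$) with $K'\setminus K''=\{(u,y)\}$, finishing this case.

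The main obstacle is Case 2: for the lex step across different $G$-facets to remove exactly one vertex of $G[\mathcal{H}]$, the new block $\{u\}\times V(H_u)\cap K'$ must consist of a single vertex. This is precisely where the completeness of $H_u$ (built into the hypothesis on the shelling of $G$) is used in an essential way; without it, advancing from $F''$ to $F'$ could introduce several new vertices in the product and the shelling property would fail. Once this is accounted for, Case 1 is routine from the shellings of the $H_x$, and the lexicographic ordering assembles the two cases into a single shelling of $G[\mathcal{H}]$.
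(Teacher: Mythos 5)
Your lexicographic ordering of the facets of $G[\mathcal{H}]$ and your Case 1 coincide with the paper's proof, but Case 2 contains a genuine error. You write ``$F''=F'\setminus\{u\}$ is a facet of $G$'' and accordingly set $K''=K'\setminus\{(u,y)\}$. The hypothesis only says $F'\setminus F''=\{u\}$, which means $F''\supseteq F'\setminus\{u\}$ and $u\notin F''$; it does not say $F''=F'\setminus\{u\}$. In fact $F'\setminus\{u\}$ is \emph{never} a facet of $G$: it is properly contained in the independent set $F'$, hence not a maximal independent set. Consequently $K'\setminus\{(u,y)\}$ is properly contained in the independent set $K'$, is not a facet of $G[\mathcal{H}]$, cannot play the role of the earlier facet required by the definition of a shelling, and does not even occupy a position in your ordering (which only orders facets).

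The repair is exactly what the paper does: take the facet $F''$ of $G$ supplied by the hypothesis (which contains $F'\setminus\{u\}$ but may contain additional vertices of $V(G)\setminus F'$), and let $K''$ be \emph{any} facet of $G[\mathcal{H}]$ associated to $F''$ that contains the block $\{x\}\times F'_x$ for every $x\in F'\setminus\{u\}$, with arbitrary facet blocks over the extra vertices of $F''\setminus F'$. Then $K''$ is a genuine facet, $K''<K'$ because $F''<F'$ in the shelling of $G$, and $K'\setminus K''=\{(u,y)\}$ since every element of $K'$ other than $(u,y)$ lies in $K''$. With this substitution your argument, including the correct observation that completeness of $H_u$ forces the $u$-block of $K'$ to be a single vertex, goes through and agrees with the paper's proof.
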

\begin{proof}
Suppose $V(G)=\{x_1, \dots, x_n\}$ and for every $1\leq i \leq n$ fix a shelling for $H_{x_i}$. We order the facets of $G[\mathcal{H}]$ as follows:
\begin{itemize}
\item For all facets $F<F'$ of $G$, each facet of $G[\mathcal{H}]$ associated to $F$ is lied before the facets associated to $F'$;
\item For every two facets $K=(\{x_{i_1}\}\times F_{i_1})\cup \dots \cup (\{x_{i_k}\}\times F_{i_k})$ and $K'=(\{x_{i_1}\}\times F'_{i_1})\cup \dots \cup (\{x_{i_k}\}\times F'_{i_k})$ associated to the facet $F=\{x_{i_1}, \dots , x_{i_k}\}$ of $G$, $K$ is lied before $K'$ if there is an integer  $1\leq \ell \leq k$ such that $F_{i_1}=F'_{i_1}, \dots , F_{i_{\ell-1}}=F'_{i_{\ell-1}}$ and $F_{i_\ell}<F'_{i_\ell}$. 
\end{itemize}
Now we claim that this order forms a shelling for $G[\mathcal{H}]$. Assume that $K$ and $K'$ are two facets of $G[\mathcal{H}]$ with $K<K'$ which are respectively associated to the facets $F$ and $F'$ of $G$. The following two cases occur:

Case I. $F=F'=\{x_{i_1}, \dots , x_{i_k}\}$.  If  $K=(\{x_{i_1}\}\times F_{i_1})\cup \dots \cup (\{x_{i_k}\}\times F_{i_k})$ and $K'=(\{x_{i_1}\}\times F'_{i_1})\cup \dots \cup (\{x_{i_k}\}\times F'_{i_k})$, then there is an integer $1\leq \ell \leq k$ such that $F_{i_1}=F_{i_1}', \dots , F_{i_{\ell-1}}=F'_{i_{\ell-1}}$ and $F_{i_\ell}<F'_{i_\ell}$. Since $H_{x_{i_\ell}}$ is shellable, there exist $y\in F'_{i_\ell}\setminus F_{i_\ell}$ and $F''_{i_\ell}<F'_{i_\ell}$ such that $F'_{i_\ell}\setminus F''_{i_\ell}=\{y\}$. Set 
$$K''=(\{x_{i_1}\}\times F'_{i_1})\cup \dots \cup (\{x_{i_{\ell -1}}\}\times F'_{i_{\ell-1}})\cup (\{x_{i_\ell}\}\times F''_{i_\ell})\cup (\{x_{i_{\ell +1}}\}\times F'_{i_{\ell+1}})\cup \dots \cup (\{x_{i_k}\}\times F'_{i_k}).$$
It is seen $K''<K'$, $(x_{i_\ell},y)\in K'\setminus K$ and $K'\setminus K''=\{(x_{i_\ell}, y)\}$ as required.

Case II. $F\neq F'$. Then $F<F'$. So by our assumption there exist $u\in F'\setminus F$ and $F''<F'$ such that $F'\setminus F''=\{u\}$ and $H_u$ is complete. Thus all facets of $H_u$ are singleton. Suppose that $F'=\{u, x_{i_2}, \dots , x_{i_k}\}$,  
$$K'=(\{u\}\times \{y\})\cup (\{x_{i_2}\}\times F'_{i_2})\cup  \dots \cup (\{x_{i_k}\}\times F'_{i_k}) ,$$
and  $K''$ is a facet of $G[\mathcal{H}]$ associated to $F''$ containing $\{x_{i_r}\}\times F'_{i_r}$ for every $2\leq r \leq k$. It is easily seen that $K''<K', (u,y)\in K'\setminus K$ and $K'\setminus K''=\{(u,y)\}$ for some $y\in H_u$.
\end{proof}

In the following examples, by Theorem \ref{shell2}, $G[\mathcal{H}]$ is shellable.
\begin{examples}\label{example}
\begin{itemize}
\item[a.] \cite[Theorem 3.12]{SF} $H_x$ is complete for all $x\in V(G)$ and $G$ is shellable. 
\item[b.] $G$ is a complete graph with $n$ vertices and $\mathcal{H}$ is a family of shellable graphs with $(n-1)$ complete graphs.
\item[c.] $G$ is a graph with $V(G)=\{a,b,c,d,e\}$ and $E(G)=\{ab, bc, cd\}$ and $\mathcal{H}=\{H_a, H_b, H_c, H_d, H_e\}$ in which $H_a, H_c$ and $H_e$ are arbitrary shellable graphs and $H_b$ and $H_d$ are some complete graphs. It can be easily seen that $\{a,c,e\}<\{a,d,e\}<\{b,d,e\}$ is a shelling of $G$ satisfying the hypothesis of Theorem \ref{shell2}. 
\end{itemize}
\end{examples}

The following corollary gives us a new class of Cohen-Macaulay graphs.
\begin{corollary}\label{cor2.16}
Suppose that $G$ is a complete graph with $n$ vertices and  $\mathcal{H}$ is a family of Cohen-Macaulay shellable graphs with $(n-1)$ complete graphs and for each $x,y \in V(G)$ we have $\alpha(H_x)=\alpha(H_y)$. Then $G[\mathcal{H}]$ is Cohen-Macaulay.
\end{corollary}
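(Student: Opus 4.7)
My plan is to combine Theorem \ref{shell2} (via Examples \ref{example}(b)) with unmixedness and then invoke the same pure-shellable-implies-Cohen-Macaulay principle used in the proof of Corollary \ref{cor2.7}.

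First, I would obtain shellability. Since $G$ is the complete graph $K_n$ and $\mathcal{H}$ contains $(n-1)$ complete graphs, Examples \ref{example}(b) (which is an instance of Theorem \ref{shell2}) applies immediately: indeed, each $H_x$ is shellable by hypothesis, and any shelling of the facets of $K_n$, namely the singletons $\{x_1\}, \{x_2\}, \dots, \{x_n\}$, can be ordered so that the unique non-complete $H_{x_i}$ is placed first; then for every pair of facets $F<F'$, the vertex $u$ in $F'\setminus F$ is a vertex $x_j$ whose $H_{x_j}$ is complete. Thus $G[\mathcal{H}]$ is shellable.

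Next, I would verify unmixedness. Because $G=K_n$, the facets of $G$ are exactly the singletons $\{x\}$ for $x\in V(G)$. Hence, by the description preceding Theorem \ref{shell}, every facet of $G[\mathcal{H}]$ has the form $\{x\}\times F_x$ for some $x\in V(G)$ and some facet $F_x$ of $H_x$. Since each $H_x$ is Cohen-Macaulay, \cite[Corollary 5.3.12]{V2} forces $H_x$ to be unmixed, so every facet of $H_x$ has cardinality $\alpha(H_x)$. By hypothesis $\alpha(H_x)=\alpha(H_y)$ for all $x,y\in V(G)$, so every facet of $G[\mathcal{H}]$ has the same cardinality. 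Therefore $G[\mathcal{H}]$ is unmixed.

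Finally, I would conclude exactly as in Corollary \ref{cor2.7}: a pure shellable simplicial complex is Cohen-Macaulay by \cite[Theorem 5.3.18]{V2}, so $G[\mathcal{H}]$ is Cohen-Macaulay. The only step that requires any thought is pairing Example \ref{example}(b) with the shelling order on the facets of $K_n$, and it is immediate; the remainder is a direct bookkeeping of facet sizes, so I do not anticipate a genuine obstacle.
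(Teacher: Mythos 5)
Your proof is correct and follows essentially the same route as the paper: shellability via Examples \ref{example}(b), unmixedness of each $H_x$ from \cite[Corollary 5.3.12]{V2}, purity of $G[\mathcal{H}]$ from the fact that the facets of $K_n$ are singletons, and the conclusion from \cite[Theorem 5.3.18]{V2}. The only difference is cosmetic: where you verify unmixedness of $G[\mathcal{H}]$ by direct inspection of the facets $\{x\}\times F_x$, the paper cites \cite[Theorem 2]{T+V} for the same fact.
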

\begin{proof}
By \cite[Corollary 5.3.12]{V2}, $H_x$s are unmixed. Since $G$ is complete, its maximal independent sets of vertices are singleton. So it can be deduced from \cite[Theorem 2]{T+V} that $G[\mathcal{H}]$ is unmixed.  On the other hand, by Examples \ref{example}(b), $G[\mathcal{H}]$ is shellable. Hence \cite[Theorem 5.3.18]{V2} completes the proof.
\end{proof}

Henceforth we are going to study the vertex decomposability and shedding vertices of $G[\mathcal{H}]$. We begin by the following proposition about shedding vertices of $G[\mathcal{H}]$.




\begin{proposition}\label{lemma2.10}
Assume that $x\in V(G)$ and $y\in V(H_x)$. 
\begin{itemize}
\item[a.] If
\begin{itemize}
\item[1.] $H_x$ is $C_5$-free and $y$ is a shedding vertex of $H_x$, or
\item[2.] $x$ and $y$ are some  shedding vertices of $G$ and $H_x$ respectively,
\end{itemize}
then $(x,y)$ is a shedding vertex of $G[\mathcal{H}]$. 
\item[b.] If $(x,y)$ is a shedding vertex of $G[\mathcal{H}]$, then either $H_x=\{y\}$ or $y$ is a shedding vertex of $H_x$.
\end{itemize}
\end{proposition}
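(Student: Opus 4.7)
The plan is to work directly from the shedding-vertex definition, unpacking the structure of neighborhoods in $K = G[\mathcal{H}]$ so that shedding of $(x,y)$ reduces to a question about independent sets in $H_x$.

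First I would record the neighborhood formula
$$N_K((x,y)) = \bigl(\{x\} \times N_{H_x}(y)\bigr) \cup \bigcup_{x' \in N_G(x)} \bigl(\{x'\} \times V(H_{x'})\bigr),$$
which yields a decomposition of $V(K \setminus N_K[(x,y)])$ into two disjoint pieces $\{x\}\times\bigl(V(H_x)\setminus N_{H_x}[y]\bigr)$ and $\bigcup_{x'\in V(G)\setminus N_G[x]}\{x'\}\times V(H_{x'})$, with no edges between them (since $x \not\sim x'$ for $x' \in V(G)\setminus N_G[x]$). Every independent set $S$ of $K \setminus N_K[(x,y)]$ then splits as $S = (\{x\}\times S_1') \cup S_2$, where $S_1'$ is independent in $H_x \setminus N_{H_x}[y]$.

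For part (a), both sufficient conditions include the hypothesis that $y$ is a shedding vertex of $H_x$, which is the ingredient I would leverage. Given any such $S$, shedding of $y$ furnishes a vertex $v \in V(H_x) \setminus (\{y\} \cup S_1')$ with $S_1'\cup\{v\}$ independent in $H_x$. The routine checks are that $(x,v)$ is not adjacent to $\{x\}\times S_1'$ (by independence of $S_1'\cup\{v\}$ in $H_x$) and not adjacent to $S_2$ (because no $(x',z)\in S_2$ has $x' \in N_G[x]$), so $S\cup\{(x,v)\}$ is an independent set of $K\setminus (x,y)$ properly containing $S$.

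For part (b), I would argue by contrapositive: assuming $V(H_x)\neq\{y\}$ and that $y$ is not a shedding vertex of $H_x$, I would construct an $S$ violating the shedding of $(x,y)$. Fix an independent $T \subseteq V(H_x)\setminus N_{H_x}[y]$ that is maximal in $H_x \setminus y$; the assumption $V(H_x)\neq\{y\}$ forces $T \neq \emptyset$, which is crucial. Combine $\{x\}\times T$ with a maximal independent set $U$ of the induced subgraph of $K$ on $\bigcup_{x'\in V(G)\setminus N_G[x]} \{x'\}\times V(H_{x'})$ to obtain $S$, and then rule out every candidate extension $v \in V(K)\setminus(\{(x,y)\}\cup S)$ by a case split on the first coordinate of $v$: the case $v=(x,z)$ is blocked by maximality of $T$ in $H_x\setminus y$; the case with first coordinate in $V(G)\setminus N_G[x]$ is blocked by maximality of $U$; and the case with first coordinate in $N_G(x)$ is blocked because any such $v$ is automatically adjacent, via the $G$-edge, to any fixed $(x,t)\in S$ arising from the nonempty $T$.

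The main obstacle is the last sub-case of (b), where the only way to block an extension $v$ with first coordinate in $N_G(x)$ is through a $G$-edge between $x$ and $x'$ landing on some $(x,t)\in S$; this is precisely where $T\neq\emptyset$ — and hence the hypothesis $V(H_x)\neq\{y\}$ — must be invoked.
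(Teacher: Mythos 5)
Your proof is correct, and for part (a) it takes a genuinely different --- and in fact stronger --- route than the paper. The paper proves a.1 by using the $C_5$-free hypothesis to produce a neighbour $y'$ of $y$ with $N_{H_x}[y']\subseteq N_{H_x}[y]$ (so that $(x,y')$ can always be added), and proves a.2 by a case split on whether $N_{H_x}[y]=H_x$, invoking the shedding of $x$ in $G$ in the dominating case to locate an extension vertex in a fibre over $N_G(x)$. Your decomposition shows neither extra hypothesis is needed: because there are no edges between $\{x\}\times V(H_x)$ and the fibres over $V(G)\setminus N_G[x]$, the vertex $v$ supplied by the shedding of $y$ applied to $S_1'$ lifts to $(x,v)$, which is automatically compatible with all of $S_2$; even when $N_{H_x}[y]=H_x$ (so $S_1'=\emptyset$), shedding of $y$ forces $H_x\neq\{y\}$ and hands you a neighbour $v$ of $y$ to use. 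So the single hypothesis common to a.1 and a.2 --- that $y$ is a shedding vertex of $H_x$ --- already yields the conclusion, and it covers cases the stated hypotheses do not (e.g.\ $G=P_3$ with $x$ an endpoint and $H_x=C_5$, where $x$ is not shedding in $G$ and $H_x$ is not $C_5$-free). Your part (b) is essentially the paper's argument in contrapositive form: both turn on the fact that the nonempty set $\{x\}\times T$ (resp.\ $\{x\}\times S'$ in the paper) blocks every candidate extension lying over $N_G(x)$, forcing any extension back into the fibre over $x$; your additional padding by a maximal independent set $U$ over $V(G)\setminus N_G[x]$ is what lets you exhibit an explicitly maximal witness, whereas the paper works directly with the equivalent ``extends by a neighbour of $(x,y)$'' formulation of the shedding condition. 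Both versions are sound.
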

\begin{proof}
a.1. Suppose that $x\in V(G)$, $H_x$ is $C_5$-free and $y$ is a shedding vertex of $H_x$. In view of \cite[Lemma 2.3]{FS}, there is a vertex $y'$ in $N_{H_x}(y)$ such that $N_{H_x}[y']\subseteq N_{H_x}[y]$. Note that 
$$N_{G[\mathcal{H}]}((x,y))=\left(\bigcup_{x'\in N_G(x)}(\{x'\}\times H_{x'})\right) \cup (\{x\}\times N_{H_x}(y)).$$
Assume that $S$ is an independent set of vertices in $G[\mathcal{H}] \setminus N_{G[\mathcal{H}]}[(x,y)]$. It is easily seen that $(x,y')$ is a neighbor of $(x,y)$ such that  $N_{G[\mathcal{H}]}((x,y'))$ is contained in $N_{G[\mathcal{H}]}[(x,y)]$. Hence $S\cup \{(x,y')\}$ is independent in $G[\mathcal{H}]\setminus (x,y)$. This shows that $(x,y)$ is a shedding vertex of $G[\mathcal{H}]$ as desired.

a.2. Suppose that  $x$ is a shedding vertex of $G$, $y$ is a shedding vertex of $H_x$ and $S$ is a maximal independent set of vertices in $G[\mathcal{H}] \setminus N_{G[\mathcal{H}]}[(x,y)]$. If $N_{H_x}[y]\neq H_x$, then $S'=\{y'\in H_x \ | \ (x,y')\in S\}\neq \emptyset$. Since $S$ is independent in $G[\mathcal{H}] \setminus N_{G[\mathcal{H}]}[(x,y)]$,  $S'$ will be independent in $H_x\setminus N_{H_x}[y]$. Hence there is a vertex $y_0\in N_{H_x}(y)$ such that $S'\cup \{y_0\}$ is independent. This shows that $(x,y_0)\in N_{G[\mathcal{H}]}((x,y))$ and $S\cup \{(x,y_0)\}$ is independent in $G[\mathcal{H}]\setminus (x,y)$. Otherwise assume that $N_{H_x}[y]= H_x$. Then if $N_G[x]=G$, then $S=\emptyset$ and the result clearly holds. Else $$S'=\{x'\in G \ | \ x'\neq x, (x',y')\in S \text{ for some } y'\in H_{x'}\}\neq \emptyset .$$
Therefore $S'$ is an independent set of vertices in $G\setminus N_G[x]$. So there is a vertex $x_0 \in N_G(x)$ such that $S'\cup \{x_0\}$ is independent. Hence $(x_0,y_0)\in N_{G[\mathcal{H}]}((x,y))$ for each $y_0\in H_{x_0}$ and $S\cup \{(x_0,y_0)\}$ is independent in $G[\mathcal{H}]\setminus (x,y)$. These show that $(x,y)$ is a shedding vertex of $G[\mathcal{H}]$.

b. Suppose that $(x,y)$ is a shedding vertex of $G[\mathcal{H}]$ and $S'$ is a maximal independent set of vertices in $H_x\setminus N_{H_x}[y]$. If $S'=\emptyset$, the result obviously holds. Else set $S=\{(x,y') \ | \ y'\in S'\}$. It is clear that $S$ is independent in $G[\mathcal{H}]\setminus N_{G[\mathcal{H}]}[(x,y)]$. Hence there exist a vertex $(x_0,y_0)\in N_{G[\mathcal{H}]}((x,y))$ such that $S\cup \{(x_0, y_0)\}$ is independent in $G[\mathcal{H}]\setminus (x,y)$. Since every vertex in $\bigcup_{x'\in N_G(x)}(\{x'\}\times H_{x'})$ is adjacent to every vertex in $S$, we should have $x_0=x$ and $y_0\in N_{H_x}(y)$. So $S'\cup \{y_0\}$ should be independent as desired. This shows that $y$ is a shedding vertex of $H_x$.
\end{proof}




Although by Proposition \ref{lemma2.10} containing a shedding vertex is feasible for $G[\mathcal{H}]$, but there are examples of vertex decomposable graphs $G$ and $H$ with shedding vertices such that $G[H]$ is not vertex decomposable. For instance if $x$ is a shedding vertex of $P_3$, then $(P_3[P_3])\setminus (x,x)$ is not vertex decomposable, while $(x,x)$ is a shedding vertex of $P_3[P_3]$ by Proposition \ref{lemma2.10}(a2). Also, one can easily see that $K_2[\{K_1, P_3\}]$ is vertex decomposable. These examples illustrate that stating the following theorem, which is a generalization of vertex decomposable part of \cite[Theorem 1.1]{Vantuyl}, is not without grace.

\begin{theorem}\label{2.15}
\begin{itemize}
\item[1.] If $H_x$ is complete for every $x\in V(G)$, then $G[\mathcal{H}]$ is vertex decomposable if and only if $G$ is vertex decomposable. 
\item[2.] If $G$ is totally disconnected, then $H_x$ is vertex decomposable for every $x\in V(G)$ if and only if $G[\mathcal{H}]$ is vertex decomposable.
\item[3.] If $G$ is not totally disconnected and $G[\mathcal{H}]$ is a vertex decomposable graph, then $H_x$ is complete for some $x\in V(G)$.
\item[4.] If $G[\mathcal{H}]$ is vertex decomposable, then $G$ and $H_x$ are also vertex decomposable for all $x\in V(G)$.
\end{itemize}
\end{theorem}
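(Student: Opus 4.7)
The plan is to dispatch parts 2 and 3 at once and then handle parts 1 and 4 by a common induction on $|V(G[\mathcal{H}])|$.

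Parts 2 and 3 should be nearly immediate. For part 2, when $G$ is totally disconnected $G[\mathcal{H}]$ is (up to isomorphism) the disjoint union of the graphs $H_x$, so Lemma \ref{lemma2.4} applied iteratively gives both directions. For part 3, vertex decomposability implies shellability, and Theorem \ref{shell}(3) applied to any edge of $G$ (which exists because $G$ is not totally disconnected) produces the required complete $H_x$.

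For the nontrivial direction of part 1, I would induct on $|V(G[\mathcal{H}])|$, assuming $G$ is vertex decomposable with each $H_{x'}$ complete. If $G$ is totally disconnected, Lemma \ref{lemma2.4} closes the case; otherwise fix a shedding vertex $x$ of $G$ and any $y\in V(H_x)$. A direct check shows $(x,y)$ is a shedding vertex of $G[\mathcal{H}]$: completeness of each $H_z$ collapses an independent set $S$ in $G[\mathcal{H}]\setminus N[(x,y)]$ to an independent set $S'\subseteq V(G)\setminus N_G[x]$ of $G$, the shedding property of $x$ yields $x'\in N_G(x)$ with $S'\cup\{x'\}$ independent, and this lifts to an extension of $S$ by any $(x',y_0)$. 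Because $H_x$ is complete, $G[\mathcal{H}]\setminus N[(x,y)]$ coincides with $(G\setminus N_G[x])[\mathcal{H}|_{V(G)\setminus N_G[x]}]$, vertex decomposable by the inductive hypothesis. For $G[\mathcal{H}]\setminus(x,y)$: if $|V(H_x)|\geq 2$ this is $G[\mathcal{H}']$ with $H_x$ replaced by the still-complete $H_x\setminus y$; if $|V(H_x)|=1$ this is $(G\setminus x)[\mathcal{H}|]$, where $G\setminus x$ is vertex decomposable since $x$ is shedding in $G$. In each case the inductive hypothesis closes the argument, and the converse direction of part 1 is a special case of part 4.

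For part 4 I use the same induction. Assuming $G$ is not totally disconnected (Lemma \ref{lemma2.4} otherwise), let $(x_0,y_0)$ be a shedding vertex of the vertex decomposable graph $G[\mathcal{H}]$; Proposition \ref{lemma2.10}(b) gives $H_{x_0}=\{y_0\}$ or $y_0$ shedding in $H_{x_0}$. A direct vertex-set computation shows $G[\mathcal{H}]\setminus N[(x_0,y_0)]$ is the disjoint union of $H_{x_0}\setminus N_{H_{x_0}}[y_0]$ and $(G\setminus N_G[x_0])[\mathcal{H}|]$, both vertex decomposable by Lemma \ref{lemma2.4}. If $|V(H_{x_0})|\geq 2$, the inductive hypothesis applied to $G[\mathcal{H}]\setminus(x_0,y_0)$ (which has $H_{x_0}$ replaced by $H_{x_0}\setminus y_0$) furnishes $G$, every $H_x$ with $x\neq x_0$, and $H_{x_0}\setminus y_0$ vertex decomposable; then the criterion of Woodroofe recalled in the introduction, applied to the shedding vertex $y_0$ of $H_{x_0}$, yields $H_{x_0}$ vertex decomposable. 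If $|V(H_{x_0})|=1$, the inductive hypothesis applied to $(G\setminus x_0)[\mathcal{H}|]$ and $(G\setminus N_G[x_0])[\mathcal{H}|]$ yields vertex decomposability of $G\setminus x_0$, $G\setminus N_G[x_0]$, and every $H_x$ with $x\neq x_0$; a lifting argument analogous to the one in part 1 shows $x_0$ is shedding in $G$, so Woodroofe's criterion gives $G$ vertex decomposable, and $H_{x_0}=K_1$ is trivially vertex decomposable.

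The main technical subtlety I anticipate is the degenerate case $|V(H_{x_0})|=1$ in part 4, in which Proposition \ref{lemma2.10}(b) gives no information about $y_0$ being shedding in $H_{x_0}$. The correct substitute is a direct lift: pad a maximal independent set $S'$ of $G\setminus N_G[x_0]$ by choosing, for each $z\in S'$, a facet of $H_z$ to form an independent set of $G[\mathcal{H}]\setminus N[(x_0,y_0)]$; apply the shedding condition at $(x_0,y_0)$ and note that because $H_{x_0}$ is a singleton the only neighbors of $(x_0,y_0)$ lie in $\bigcup_{x'\in N_G(x_0)}\{x'\}\times V(H_{x'})$, so the admissible neighbor produced must have its first coordinate in $N_G(x_0)$, giving the extension of $S'$ in $G$ that certifies $x_0$ as a shedding vertex.
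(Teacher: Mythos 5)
Your proposal is correct and follows essentially the same route as the paper: parts 2 and 3 are handled identically, and part 4 uses the same induction on $|V(G[\mathcal{H}])|$ with the same identifications of $G[\mathcal{H}]\setminus(x_0,y_0)$ and $G[\mathcal{H}]\setminus N[(x_0,y_0)]$ and the same split on whether $H_{x_0}$ is a singleton. The only real divergences are that the paper disposes of part 1 by citing \cite[Theorem 3.7]{SF} where you give a direct inductive argument, and that in Case I the paper re-derives the shedding property of $y_0$ in $H_{x_0}$ by hand where you invoke Proposition \ref{lemma2.10}(b); both are sound. A small point in your favour: in the singleton case the paper pads the maximal independent set $S$ of $G\setminus N_G[x_0]$ with one vertex per $H_z$, which need not produce a \emph{maximal} independent set of $G[\mathcal{H}]\setminus N[(x_0,y_0)]$ and hence does not immediately force the extending vertex into $N((x_0,y_0))$; your choice of a full facet of each $H_z$ makes the lifted set maximal and closes that gap cleanly.
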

\begin{proof}
1. follows from \cite[Theorem 3.7]{SF}.

2. If $G$ is a totally disconnected graph, then $G[\mathcal{H}]$ is disjoint union of $H_x$s. So, in view of Lemma \ref{lemma2.4}, $H_x$ is vertex decomposable for every $x\in V(G)$ if and only if $G[\mathcal{H}]$ is vertex decomposable.

3.  immediately follows from Theorem \ref{shell}(3), since every vertex decomposable graph is shellable.



4. If $|V(G[\mathcal{H}])|\leq 2$, then the result clearly holds. Suppose inductively that $|V(G[\mathcal{H}])|>2$ and the result has been proved for smaller values of $|V(G[\mathcal{H}])|$. If $G[\mathcal{H}]$ is totally disconnected, then so are $G$ and $H_x$s. Else assume that  $V(G)=\{x_1, \dots , x_n\}$ and $H_i=H_{x_i}$. Without loss of generality suppose that $(x_1,y)$ is a shedding vertex of $G[\mathcal{H}]$ for some $y\in V(H_1)$ such that $G[\mathcal{H}]\setminus (x_1,y)$ and $G[\mathcal{H}]\setminus N_{G[\mathcal{H}]}[(x_1,y)]$ are vertex decomposable. It can be checked that
$$G[\mathcal{H}]\setminus (x_1,y)=G[\mathcal{H}'],$$
$$G[\mathcal{H}]\setminus N_{G[\mathcal{H}]}[(x_1,y)]=\{x_1\}[H_1\setminus N_{H_1}[y]]\cup (G\setminus N_G[x_1])[\mathcal{H}''],$$
where  $\mathcal{H}'=\{H_1\setminus y, H_2, \dots , H_n\}, G\setminus N_G[x_1]=\{x_{j_1}, \dots , x_{j_k}\}$ and $\mathcal{H}''=\{H_{j_1}, \dots , H_{j_k}\}$. Note that if $H_1=\{y\}$, then
 $$G[\mathcal{H}]\setminus (x_1,y)=(G\setminus x_1)[\{H_2, \dots , H_n\}].$$
  So we prove the result in each case:

Case I. $H_1\neq \{y\}$. In this case, vertex decomposability of $G[\mathcal{H}]\setminus (x_1,y)$ together with inductive hypothesis implies that $G$ and $H_i$ are vertex decomposable for all $2\leq i \leq m$ and $H_1\setminus y$ is vertex decomposable. Now if $H_1= N_{H_1}[y]$, then $H_1$ is also vertex decomposable. Otherwise, vertex decomposability of $G[\mathcal{H}]\setminus N_{G[\mathcal{H}]}[(x_1,y)]$ and Lemma \ref{lemma2.4} imply that $H_1\setminus N_{H_1}[y]$ is vertex decomposable. It remains to prove that $y$ is a shedding vertex of $H_1$. To this aim suppose that $S$ is a non-empty maximal independent set of vertices in $H_1\setminus N_{H_1}[y]$. Then $S'=\{(x_1,s) \ | \ s\in S\}$ is a non-empty independent set of vertices in $G[\mathcal{H}]\setminus N_{G[\mathcal{H}]}[(x_1,y)]$. So there is a vertex $(x_j,y')\in N_{G[\mathcal{H}]}[(x_1,y)]$ such that $S'\cup \{(x_j,y')\}$ is independent in $G[\mathcal{H}]\setminus (x_1,y)$. If $x_j\in N_G(x_1)$, then $(x_j,y')$ is adjacent to each vertex of $S'$ and so this is a contradiction. Hence $j=1, y'\in N_H(y)$ and $S\cup \{y'\}$ is independent in $H_1$. These show that $H_1$ is also vertex decomposable.

Case II. $H_1=\{y\}$. Vertex decomposability of $G[\mathcal{H}]\setminus (x_1,y)$ and $G[\mathcal{H}]\setminus N_{G[\mathcal{H}]}[(x_1,y)]$, Lemma \ref{lemma2.4} and inductive hypothesis establish that  $G\setminus x_1, G\setminus N_G[x_1]$ and $H_i$s are vertex decomposable. Now suppose that $S=\{x_{\ell_1}, \dots , x_{\ell_r}\}$ is a maximal independent set of vertices in $G\setminus N_G[x_1]$ . Then for each $1\leq i \leq r$ choose one vertex $y_{\ell_i}$ in $H_{\ell_i}$ and set $S'=\{(x_{\ell_i}, y_{\ell_i}) \ | \ 1\leq i \leq r\}$. It is clear that $S'$ is independent in $G[\mathcal{H}]\setminus N_{G[\mathcal{H}]}[(x_1,y)]$. So there is a vertex $(x_j,y')\in N_{G[\mathcal{H}]}[(x_1,y)]$ such that $S'\cup \{(x_j,y')\}$ is independent. Since $H_1=\{y\}$, we should have $x_j\in N_G(x_1)$ and $S\cup \{x_j\}$ is independent. This shows that $G$ is also vertex decomposable.
\end{proof}

\begin{corollary}
If $G[\mathcal{H}]$ is a vertex decomposable and Cohen-Macaulay graph, then for each $x\in V(G)$, $H_x$ is Cohen-Macaulay.
\end{corollary}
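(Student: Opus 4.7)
The plan is to reduce the statement to showing each $H_x$ is pure shellable, using the machinery already developed in the paper. Specifically, since $G[\mathcal{H}]$ is vertex decomposable, Theorem \ref{2.15}(4) immediately yields that each $H_x$ is vertex decomposable, hence shellable. What remains is to establish that each $H_x$ is unmixed; then vertex decomposability together with unmixedness gives pure shellability, which yields Cohen-Macaulayness via \cite[Theorem 5.3.18]{V2}.

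The heart of the argument will be deducing unmixedness of each $H_x$ from unmixedness of $G[\mathcal{H}]$ (which itself follows from the Cohen-Macaulay hypothesis via \cite[Corollary 5.3.12]{V2}). Fix $x\in V(G)$ and extend $\{x\}$ to a maximal independent set $F=\{x=x_{i_1},x_{i_2},\dots,x_{i_k}\}$ of $G$. For every choice of maximal independent set $F_{i_j}$ in $H_{x_{i_j}}$ for $2\leq j\leq k$, and every maximal independent set $F_1$ of $H_x$, the union $(\{x\}\times F_1)\cup\bigcup_{j\geq 2}(\{x_{i_j}\}\times F_{i_j})$ is a facet of $G[\mathcal{H}]$. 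Fixing the $F_{i_j}$ for $j\geq 2$ and letting $F_1$ range over all facets of $H_x$, unmixedness of $G[\mathcal{H}]$ forces $|F_1|$ to be constant, so $H_x$ is unmixed.

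Assembling the pieces: each $H_x$ is both vertex decomposable and unmixed, hence pure shellable, hence Cohen-Macaulay. The main obstacle is the unmixedness step; the description of facets of $G[\mathcal{H}]$ recalled in the paragraph before Theorem \ref{shell} is essential here, since it is what guarantees that one can vary only the $H_x$-coordinate of a facet of $G[\mathcal{H}]$ without disturbing the rest. No further subtlety is needed, so the proof should be quite short once these ingredients are cited in order.
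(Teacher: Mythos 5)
Your proof is correct and follows the same overall skeleton as the paper's: deduce vertex decomposability of each $H_x$ from Theorem \ref{2.15}(4), deduce unmixedness, and conclude Cohen-Macaulayness from pure shellability via \cite[Theorem 5.3.18]{V2}. The one place you diverge is the unmixedness step: the paper simply cites \cite[Corollary 5.3.12]{V2} together with \cite[Theorem 2]{T+V} (Topp--Volkmann's characterization of well-covered lexicographic products) to transfer unmixedness from $G[\mathcal{H}]$ down to the $H_x$'s, whereas you prove this transfer directly by fixing a facet $F$ of $G$ through $x$, freezing the facets chosen in the other coordinates, and letting the facet of $H_x$ vary; since every such union is a facet of $G[\mathcal{H}]$ (by the facet description recalled before Theorem \ref{shell}), unmixedness of $G[\mathcal{H}]$ forces all facets of $H_x$ to have the same cardinality. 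This direct argument is sound and self-contained, and it buys independence from the external Topp--Volkmann result at the cost of a few extra lines; the paper's version is shorter but leans on that citation.
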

\begin{proof}
By Theorem \ref{2.15}(4), $H_x$s are vertex decomposable. On the other hand \cite[Corollary 5.3.12]{V2} and \cite[Theorem 2]{T+V} show that $H_x$s are unmixed. Now, the result follows from \cite[Theorem 11.3]{BW2} and \cite[Theorem 5.3.18]{V2}.
\end{proof}

\textit{Acknowledgements.}
The author would like to thank the referee for interest in the
subject, careful reading and comments on this article.

\end{document}